\newtheorem{theorem}{Theorem}[section]                
\newtheorem{proposition}[theorem]{Proposition}       
\newtheorem{corollary}[theorem]{Corollary}
\newtheorem{lemma}[theorem]{Lemma}
\theoremstyle{definition}
\newtheorem{definition}[theorem]{Definition}
\newtheorem{remark}[theorem]{Remark}
\newtheorem{example}[theorem]{Example}
\begin{document}

\title{A global approach to the refinement of manifold data}

\author{Nira Dyn\thanks{niradyn@post.tau.ac.il} }
\author{Nir Sharon\thanks{Nir.Sharon@math.tau.ac.il}}
\affil{School of Mathematical sciences, Tel-Aviv University, Tel-Aviv, Israel}

\date{\vspace{-5ex}}
\maketitle

\begin{abstract} 
A refinement of manifold data is a computational process, which produces a denser set of discrete data from a given one. Such refinements are closely related to multiresolution representations of manifold data by pyramid transforms, and approximation of manifold-valued functions by repeated refinements schemes. Most refinement methods compute each refined element separately, independently of the computations of the other elements. Here we propose a global method which computes all the refined elements simultaneously, using geodesic averages. We analyse repeated refinements schemes based on this global approach, and derive conditions guaranteeing strong convergence.
\end{abstract}

\maketitle

\textbf{Key Words.} Manifold data, geodesic average, convergence analysis.

\textbf{AMS(MOS) subject classification.} 65D99, 40A99, 58E10.



\section{Introduction}

In recent years many modern sensing devices produce data on manifolds or data that is modelled as points on a manifold. An example of such data is orientations of a rigid body as function of time, which can be regarded as data sampled from a function mapping a real interval to the Lie group of orthogonal matrices \cite{donoho}. The classical methods for the approximation of a function from its samples, such as polynomial or spline interpolation, are linear, and there is no guarantee that such approximations produce always manifold values, due to the non-linearity of manifolds. Therefore, alternative methods are required.

Contrary to the development of classical approximation methods and numerical analysis methods for real-valued functions, the development in the case of manifold-valued functions, which is rather recent, was mainly concerned in its first stages with advanced numerical and approximation processes. Examples of such processes are  geometric integration of ODE on manifolds (see e.g. \cite{iserles2000lie}), subdivision schemes on manifolds (see e.g. \cite{GrohsWallner2,ThomasLie1}) and wavelets-type approximation on manifolds (see e.g. \cite{grohs2012definability,donoho}). 

Subdivision schemes were created originally to design geometrical models \cite{ChaikinPaper,Lane_Reisenfeld_Original}. Later, they were recognized as methods for approximation \cite{dubucDeslauriers1,4Point}. The important advantage of these schemes is their simplicity and locality. They are defined by repeatedly refining sequences of points, applying in each refinement step simple and local arithmetic averaging. This enables the extension of subdivision schemes to more abstract settings, such as matrices \cite{NirUri} and sets \cite{dyn2001spline}.

For manifold valued data, Wallner and Dyn \cite{WallnerNiraProx} introduced the concept of adapting linear subdivision schemes to manifold data, and in particular for Lie group data. That paper initiated a new path of research on subdivision schemes for manifold data, e.g., \cite{NirUri,GrohsWallner2}. Adaptation of a linear subdivision scheme can be done in several ways, for example, by rewriting the refinement rules as repeated binary averages, and then replacing each binary average by a geodesic average, see e.g., \cite{NirUri,WallnerNiraProx}. 

Averages play a significant role in the methods for the adaptation of linear subdivision schemes to manifold data. A natural choice of an average of two points on a geodesically complete manifold is the midpoint of the geodesic curve between the two points. In some cases, the geodesic curve is known explicitly, e.g., \cite{faridi1987geodesics, genin2007geodesics_journal, RiemanSPD,marenich1997geodesics}, while in general it can be calculated numerically, e.g., \cite{crane2013geodesics,fucik1980nonlinear,kimmel1998computing,memoli2001fast}. 

The weighted geodesic average is induced by the geodesic curve, and acts as a generalization of the weighted arithmetic average $(1-t)a+tb$ in Euclidean spaces. For a weight $t \in [0,1]$, it is the point on the geodesic curve, connecting the two averaged points, which divides this curve segment in the ratio $\frac{t}{1-t}$.  Furthermore, on several manifolds, the geodesic average can also be extended to weights outside $[0,1]$, that is extrapolating the geodesic curve of two points beyond these points, e.g., \cite{UriNir}. The geodesic average is also well-defined on more general spaces known as geodesic metric spaces, e.g., \cite{bridson1999metric}. Thus, in such spaces our adaptation method is also valid.

We present here a method for the adaptation of linear subdivision schemes to manifold data based on the idea of replacing weighted arithmetic averages by weighted geodesic averages in a generalized Lane-Riesenfeld algorithm \cite{Lane_Reisenfeld_Original}. The refinement step in this proposed generalization consists of an elementary refinement of doubling the data, followed by several rounds of averaging. In each round of averaging the data is replaced by the same weighted average of all pairs of adjacent points in the data. Such an adaptation is discussed shortly in \cite{dyn2006three,WallnerNiraProx}. We term such a refinement step ``global refinement".

Many results, concerning the convergence and smoothness of adapted subdivision schemes, are presented in the literature of the past few years, e.g., \cite{GrohsWallner2,WallnerNiraProx,ThomasLie1}. Most of these results are based on proximity conditions. A proximity condition bounds the distance between the operation of an adapted refinement step to the operation of its linear counterpart in terms of the maximal distance between adjacent data points. Such proximity conditions hold, since a manifold is locally close to a Euclidean space. Thus, the convergence results are often valid only for ``dense enough data", which is, in general, a condition that is hard to quantify and depends on properties of the manifold (such as curvature).

Recently, a progress in the convergence analysis is established in several papers which address the question of convergence from any initial data. Such a result is presented in \cite{ebner2013convergence} for adapted subdivision schemes to data in Hadamard spaces. Results for data on the manifold of positive definite matrices are derived in \cite{NirUri}. For the case of interpolatory subdivision schemes there are also results for several different metric spaces e.g., \cite{UriNir,kels2013subdivision,wallnerconvergent}. 

Here we prove convergence from all initial data, of the above adapted generalized Lane-Riesenfeld algorithm, when the weighted average in each round corresponds to a weight in $[0,1]$, and give conditions for such convergence when some averages have weights outside $[0,1]$. In addition, we extend the above construction to a wider class of linear schemes, by introducing weighted trinary averages based on geodesic weighted averages, and give sufficient conditions for convergence from all initial data. In all these cases, and for manifolds with globally bounded curvature, the convergence guarantees that the limits are $C^1$, based on the proximity analysis in \cite{WallnerNiraProx}. 

Three important observations on our adaptation method: 
\begin{enumerate}
\item
It extends the class of linear schemes for which an adapted scheme is known to be convergent from all initial data.
\item
It is well-defined and convergent from all data in a wide class of geodesic metric spaces.
\item
It leads to computationally feasible subdivision schemes. 
\end{enumerate}

The convergence analysis introduced in this paper supplies a new tool for the analysis of linear schemes. In particular, this analysis guarantees the convergence of any linear scheme with a symbol which is a Hurwitz polynomial, up to multiplication by a monomial. The question whether this method can improve our ability to determine the convergence of linear subdivision schemes is beyond the scope of this paper and is still under investigation.

The paper is organized as follows. We start in Section \ref{sec:theoreticalBack} by providing a short survey of the required background, including a summary on the Lane-Riesenfeld algorithm and a short review on geodesics and manifolds. We conclude Section \ref{sec:theoreticalBack} with a short discussion on a sufficient condition for the convergence of adapted subdivision schemes. Section \ref{sec:global_refinements} introduces our generalization of the Lane-Riesenfeld algorithm. Then, we give conditions for the convergence of an adapted scheme based on this algorithm, from any initial manifold data, where the corresponding linear scheme has a factorizable symbol over the reals. In Section \ref{sec:global_refinements_complex_root} we further extend the algorithm to the adaptation of general linear schemes, and conclude the paper by the convergence analysis of these schemes.


\section{Preliminaries} \label{sec:theoreticalBack}

\subsection{Subdivision schemes and the Lane-Riesenfeld algorithm} \label{subsec:classical_subdivision_schemes}

Linear, univariate subdivision schemes are defined on numbers (the functional setting) , and are extended to vectors by operating on each component separately. In the functional setting, these schemes are approximation operators, when the data is sampled uniformly from a continuous function $f$. We denote the sampled data $f_i = f(ih)$, $i \in \mathbb{Z}$, $h>0$ by $\mathbf{f} = \{ f_i\}_{i \in \mathbb{Z}}$. Any subdivision scheme consists of refinement rules that map $\mathbf{f}$ to a new sequence $\mathcal{S}(\mathbf{f})$ associated with the values at $ih/2$, $i \in \mathbb{Z}$. 

Let us denote by $\mathcal{S}$ a refinement rule, defined by a finitely supported mask $\mathbf{a}:\mathbb{Z} \to \mathbb{R} $, as
\begin{equation} \label{eqn:classical_subdivision_scheme}
\mathcal{S}(\mathbf{f})_j = \sum_{i \in \mathbb{Z}} a_{j-2i} f_i .
\end{equation}
A (stationary) subdivision scheme with a refinement rule $\mathcal{S}$ is a repeated application of \eqref{eqn:classical_subdivision_scheme} and is also denoted by $\mathcal{S}$.

A subdivision $\mathcal{S}$ is termed convergent if the sequence of piecewise linear interpolants to the data $(i2^{-k},\mathcal{S}^k(\mathbf{f})_i )$ converges uniformly (see e.g. \cite{NiraScheme}). By definition, the limit is a continuous function.

The Lane-Reisenfeld (L-R) algorithm is a classical algorithm, which executes the refinement rules of a B-spline subdivision scheme \cite{Lane_Reisenfeld_Original}. This algorithm replaces each step of refinement by an elementary refinement (doubling all the data points) followed by several stages of averaging. In each stage of averaging, the data points are replaced by the mid-points of all pairs of consecutive data points. As a result, the refinement is done simultaneously to all data points. We term this refinement a global refinement, in contrary to the direct evaluation of \eqref{eqn:classical_subdivision_scheme}, where each refined point is calculated independently of the other refined points. The refinement step of the L-R algorithm is presented in Algorithm \ref{alg:Lane-Reisenfeld}. 
 
 \begin{algorithm}[ht]
\caption{The refinement step of the Lane-Reisenfeld algorithm }
\label{alg:Lane-Reisenfeld}
\begin{algorithmic}[1]
\REQUIRE  The data to be refined $ \mathbf{f} = \{f_i\}_{i \in \mathbb{Z}}$. The degree of the B-spline $m$.
\ENSURE The refined data $\mathcal{S} \left( \mathbf{f} \right) $.
\STATE  $q_{2i,0} \gets p_{i}$    \label{LRalg:init1}
\STATE  $q_{2i+1,0} \gets p_{i}$  \label{LRalg:init2}
\FOR{$j=1$ \TO $m$}    \label{LRalg:factor_loop}
\FOR{$i \in \mathbb{Z}$}  \label{LRalg:main_loop}
\STATE  $q_{i,j} \gets \frac{1}{2}(q_{i,j-1}+q_{i+1,j-1}) $ \label{LRalg:average_loop}
\ENDFOR     \label{LRalg:data_const2}
\ENDFOR
\RETURN $\{q_{i,m} \}_{i \in \mathbb{Z}}$
\end{algorithmic}
\end{algorithm}

An important tool in the analysis of convergence and smoothness of subdivision schemes is the symbol, defined as the $z$-transform of the mask $\mathbf{a}$, that is $a(z) =\sum_{i \in \mathbb{Z}} a_iz^i$. For example, the symbol of the B-spline subdivision scheme of degree $m$ is $a(z) = (1+z)^{m+1}/2^m$. A necessary condition for convergence is $a(1)=2$ and $a(-1)=0$ implying that the subdivision scheme is invariant to a translation of the data \cite[Proposition 2.1]{NiraScheme}. With the symbol $a(z)$ the refinement rules \eqref{eqn:classical_subdivision_scheme} can be written algebraically as
\begin{equation} \label{eqn:algebraic_refinement}
\sum_{j \in \mathbb{Z}} \mathcal{S}(\mathbf{f})_j z^j = a(z) \sum_{j \in \mathbb{Z}}  f_j z^{2j} , 
\end{equation}
where the equality is in the sense of equal coefficients corresponding to the same power of $z$. The L-R algorithm is an interpretation of \eqref{eqn:algebraic_refinement} with the symbols of the B-spline subdivision schemes. For explanation see Section \ref{subsec:gloabl_refinement_1} and in particular \eqref{eqn:symbol_as_aveg}.

Over the years, several generalizations of the L-R algorithm have been proposed. In \cite{cashman2013generalized} any step of the subdivision consists of a refinement step of a fixed converging subdivision scheme, followed by a fixed number of ``smoothing rounds" based on another subdivision scheme (e.g., applying the insertion rule of an interpolatory scheme to each point). In \cite{dyn2011convergence,Goldman} non-linear averages of numbers replace the arithmetic (linear) averages. A generalization based on a geodesic average goes back to \cite{noakes1998nonlinear,noakes1999accelerations} where a corner cutting subdivision scheme based on geodesic averages is presented and analysed. In \cite{dyn2001spline} the L-R algorithm is adapted to compact sets based on the metric average which is a geodesic average in the metric space of compact sets with the Hausdorff metric.

In this paper we discuss the adaptation of subdivision schemes from numbers to manifold data. To distinguish between sequences of numbers (or vectors) to sequences on a manifold, we denote by $\mathbf{f} = \{ f_i \}_{i \in \mathbb{Z}}$ and $\mathbf{p} = \{ p_i \}_{i \in \mathbb{Z}}$ a sequence of Euclidean data and manifold, respectively. 

\subsection{On manifolds and geodesics} \label{subsec:manifolds_and_geodesics}

A \textit{geodesic} (or a geodesic curve) is a fundamental notion in differential geometry. This notion is an extension of the shortest arc on a surface, joining two arbitrary points $p_1$ and $p_2$ on the surface. On a plane, the geodesic is simply the line segment connecting $p_1$ and $p_2$, described by 
\begin{equation} \label{eqn:arithmetic_mean}
 (1-t)p_1+tp_2 , \quad t \in [0,1] . 
\end{equation}
This line can be also characterized by its zero curvature and its endpoints. For a manifold, this property is generalized by having zero geodesic curvature (or constant velocity derived from the first fundamental form). In Riemannian manifolds, the geodesic curve is defined as the solution to the geodesic Euler-Lagrange equations. It turns out that any shortest path between two points is a geodesic curve. 

In connected Riemannian manifolds, the Hopf-Rinow theorem guarantees that geodesic curves connecting any two points are globally well defined and smooth, see e.g., \cite{do1992riemannian}. Such manifolds are also known as geodesically complete or simply complete Riemannian manifolds. For such manifolds, one can derive the uniqueness of the geodesic curve connecting any two points, in case one point is outside the cut locus of the other. Henceforth, we will use the term geodesic curve for such shortest path curves.

The geodesic curve is of great importance in our adaptation procedures. A natural question is its availability in different manifolds. Indeed, in many cases, the geodesic curve is known explicitly. Here are several examples: on a sphere (e.g., \cite{faridi1987geodesics}), on an ellipsoid (e.g., \cite{genin2007geodesics_journal}), on the cone of positive definite matrices (e.g., \cite{RiemanSPD}), in the Lie group of orthogonal matrices of the same determinant (e.g., \cite[Chapter 3]{StillwelLie}), in the Heisenberg groups (e.g., \cite{marenich1997geodesics}). Alternatively, geodesics can be calculated numerically. This can be done by directly solving the Euler-Lagrange equations (e.g., \cite{fucik1980nonlinear}), by fast marching methods (e.g., \cite{kimmel1998computing}), by exploiting heat kernels based methods (e.g., \cite{crane2013geodesics}), or other hyper-surfaces techniques (e.g., \cite{memoli2001fast}), just to name a few.

An important property of the geodesic curve is the \textit{metric property}. Let $\mathcal{M}$ be a complete Riemannian manifold with associated metric $d$. Then, for any $p_1,p_2 \in \mathcal{M}$ the geodesic curve connecting $p_1$ and $p_2$, that is $M_t(p_1,p_2)$, $t \in [0,1]$ with $M_0(p_1,p_2)=p_1$ and $M_1(p_1,p_2)=p_2$, satisfies 
\begin{equation} \label{eqn:metric_property}
d(M_t(p_1,p_2),p_2) =  (1-t)d(p_1,p_2) , \quad t \in [0,1] . 
\end{equation}
Since $d$ is a metric, we also have the compliment formula $d(p_1,M_t(p_1,p_2)) =  td(p_1,p_2)$. In this paper, we consider data $\mathbf{p}$ such that the geodesic curve between any two adjacent data points in $\mathbf{p} $ is well-defined, and term such data ``admissible". Then, the geodesic curve $M_t$ is used as a weighted $t$ mean, that is the manifold analogue of the arithmetic mean \eqref{eqn:arithmetic_mean}. In some cases, we may need $M_t$ to be defined for values of $t$ outside $[0,1]$, but close to it. Therefore, we must assume that the geodesic curve is well-defined for these ``extrapolation" values. In these cases the metric property \eqref{eqn:metric_property} is modified, replacing $1-t$ by $\abs{1-t}$.

There are some non-linear spaces, other than Riemannian manifolds, where the geodesic curve connecting any two points is unique. These are the geodesic metric spaces, see e.g., \cite{bridson1999metric}. In such spaces, the differential structure is missing and a geodesic curve is defined as the path satisfying \eqref{eqn:metric_property}. Clearly, this definition agrees with the geodesic curve on Riemannian manifolds. Note that, in general, we do not need the uniqueness of the geodesic curve, but a canonical way to choose it, see e.g., \cite{dyn2001spline}.

\subsection{Sufficient conditions for convergence of manifold-valued subdivision schemes}

The convergence of manifold-valued subdivision schemes can be defined intrinsically. For that, we defined for any data sequence $\mathbf{p}$, a piecewise geodesic interpolant $I(\mathbf{p})$, connecting any pair of consecutive points in $\mathbf{p}$ by their geodesic curve. The manifold-valued subdivision scheme $\widetilde{ \mathcal{S}}$ is convergent, if the sequence $I(\widetilde{ \mathcal{S}}^k(\mathbf{p})   )$, $k \in \mathbb{Z}_+$ converges uniformly relative to the metric of the manifold (see \cite{NiraManofold1}).

The analysis of adapted subdivision schemes in many papers is based on the method of proximity, introduced in \cite{WallnerNiraProx}. This analysis uses conditions that indicate the proximity of the adapted refinement rule $\widetilde{ \mathcal{S}}$ to its corresponding linear refinement rule $ \mathcal{S}$. The simplest proximity condition is
\begin{equation} \label{eqn:proximity_condition}
d \left( \mathcal{S}(\mathbf{p}),\widetilde{ \mathcal{S}}(\mathbf{p}) \right)  \le c  \left( \delta(\mathbf{p}) \right)^2 , \quad \delta(\mathbf{p}) = \sup_{i \in \mathbb{Z}} d(p_i,p_{i+1}) , \quad c \in \mathbb{R}_+ .
\end{equation}
In \cite{WallnerNiraProx} it is proved that if $ \mathcal{S}$ is a refinement rule of a convergent scheme that generates $C^1$ limits, then condition \eqref{eqn:proximity_condition} implies (with additional mild assumptions on the refinement rule $\mathcal{S}$) that for $\delta(\mathbf{p})$ small enough, the adapted subdivision scheme $\widetilde{ \mathcal{S}}$, applied to the initial data $\mathbf{p}$, converges to a $C^1$ limit.

The weakness of the proximity method is that convergence is only guaranteed for ``close enough" data points. This requirement is typically not easy to quantify and it depends on the manifold and its curvature. 

For a linear subdivision schemes a contractivity factor $\mu$, namely
\begin{equation} \label{eqn:contractivity_factor_general}
 \delta\left( \mathcal{S}(\mathbf{p}) \right)  \le \mu \delta(\mathbf{p})   , \quad \mu \in (0,1), 
\end{equation}
implies the convergence of the scheme from any initial data, see e.g. \cite{NiraScheme}. 

For non-linear subdivision schemes, and in particular for schemes adapted to manifold data, contractivity is not sufficient for convergence, and an additional condition is required, see \cite{NiraManofold1}.

\begin{definition}[Displacement-safe] 
Let $\widetilde{\mathcal{S}}$ be a subdivision scheme adapted to manifold data. We say that $\widetilde{\mathcal{S}}$ is ``displacement-safe" if
\begin{equation} \label{eqn:contractivity_extra_condition}
d(\widetilde{\mathcal{S}}(\mathbf{p})_{2i},(\mathbf{p})_i )  \le C \delta(\mathbf{p} ) ,\quad i \in \mathbb{Z} .
\end{equation}
for any sequence of manifold data $\mathbf{p}$, where $C$ is a constant independent of $\mathbf{p}$.
\end{definition}

In \cite{NiraManofold1}, it is proved that
\begin{theorem} \label{thm:contraction_convergence}
Let $\widetilde{\mathcal{S}}$ be a displacement-safe subdivision scheme for manifold data with a contractivity factor $\mu<1$. Then, $\widetilde{\mathcal{S}}$ is convergent for any input manifold data.
\end{theorem}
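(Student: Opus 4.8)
The plan is to show that the sequence of piecewise geodesic interpolants $\{ I(\widetilde{\mathcal{S}}^k(\mathbf{p})) \}_{k \in \mathbb{Z}_+}$ is uniformly Cauchy, and then to invoke completeness of the manifold to obtain a continuous uniform limit, which is precisely the definition of convergence given above. Throughout I write $\mathbf{p}^{(k)} = \widetilde{\mathcal{S}}^k(\mathbf{p})$ and parametrize $I(\mathbf{p}^{(k)})$ so that its value at the dyadic parameter $i 2^{-k}$ equals $p^{(k)}_i$. The first, routine, observation is that iterating the contractivity factor \eqref{eqn:contractivity_factor_general} gives $\delta(\mathbf{p}^{(k)}) \le \mu^k \delta(\mathbf{p})$, so the mesh size of the data decays geometrically.

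The heart of the argument is a single-step estimate bounding $\| I(\mathbf{p}^{(k+1)}) - I(\mathbf{p}^{(k)}) \|_\infty$ in the uniform metric $d_\infty(F,G) = \sup_t d(F(t),G(t))$. Fix a parameter $t$ and let $i = \lfloor t 2^k \rfloor$, so that $t$ lies in the level-$k$ interval $[i 2^{-k}, (i+1) 2^{-k}]$. The grid point $p^{(k+1)}_{2i}$ at level $k+1$ shares the parameter $i 2^{-k}$ with $p^{(k)}_i$, so the displacement-safe condition \eqref{eqn:contractivity_extra_condition} applied to $\mathbf{p}^{(k)}$ yields $d(p^{(k+1)}_{2i}, p^{(k)}_i) \le C\delta(\mathbf{p}^{(k)}) \le C\mu^k \delta(\mathbf{p})$. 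On the other hand, by the metric property \eqref{eqn:metric_property} any point of an interpolant lies within one geodesic segment of a grid point, so $d(I(\mathbf{p}^{(k)})(t), p^{(k)}_i) \le \delta(\mathbf{p}^{(k)})$, while $t$ lies within two consecutive level-$(k+1)$ intervals of the parameter of $p^{(k+1)}_{2i}$, giving $d(I(\mathbf{p}^{(k+1)})(t), p^{(k+1)}_{2i}) \le 2\delta(\mathbf{p}^{(k+1)})$. Combining these three bounds through the triangle inequality and using $\delta(\mathbf{p}^{(k)}) \le \mu^k \delta(\mathbf{p})$ produces
\[
d\big(I(\mathbf{p}^{(k+1)})(t),\, I(\mathbf{p}^{(k)})(t)\big) \le (C + 2\mu + 1)\,\mu^k \delta(\mathbf{p}),
\]
uniformly in $t$.

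Since this bound is a fixed constant times $\mu^k$ with $\mu < 1$, the tail sums $\sum_{k \ge N} \| I(\mathbf{p}^{(k+1)}) - I(\mathbf{p}^{(k)}) \|_\infty$ form a convergent geometric series, so $\{ I(\mathbf{p}^{(k)}) \}$ is Cauchy in the uniform metric. As $\mathcal{M}$ is a complete metric space (Hopf-Rinow), a uniformly Cauchy sequence of continuous $\mathcal{M}$-valued functions converges uniformly to a continuous limit, which is exactly convergence of $\widetilde{\mathcal{S}}$ in the sense defined above. The step I expect to require the most care is the middle one: correctly matching the dyadic parametrizations across refinement levels and tracking the constant (here the factor $2$) arising because a single level-$k$ interval splits into two level-$(k+1)$ intervals, so that an interpolant value may be up to two geodesic segments from the reference grid point $p^{(k+1)}_{2i}$. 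Everything else reduces to the geometric decay supplied by contractivity and the pointwise control supplied by the displacement-safe condition.
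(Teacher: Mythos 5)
Your proposal is correct, and it is essentially the standard argument behind this theorem: the paper itself does not prove it but cites \cite{NiraManofold1}, where the proof is exactly this telescoping scheme --- geometric decay $\delta(\mathbf{p}^{(k)})\le\mu^k\delta(\mathbf{p})$ from contractivity, the displacement-safe bound to tie $p^{(k+1)}_{2i}$ to $p^{(k)}_i$ at the shared dyadic parameter, and a uniform-Cauchy conclusion via completeness of the manifold. Your single-step constant $(C+2\mu+1)\mu^k\delta(\mathbf{p})$ and the careful accounting of the two level-$(k+1)$ subintervals are handled correctly, so nothing is missing.
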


\begin{remark} \label{rem:two_covergence_remarks}
Two concluding remarks:
\begin{enumerate}
\item 
Note that interpolatory schemes satisfy \eqref{eqn:contractivity_extra_condition} with $C=0$ by definition and thus are displacement-safe.
\item  \label{rem:TCR_part2}
In \cite{WallnerNiraProx} it is proved that any adaptation of \eqref{eqn:classical_subdivision_scheme} based on repeated geodesic averages satisfies \eqref{eqn:proximity_condition}, under mild assumptions on the manifold, such as manifolds with globally bounded curvature. This observation implies that for $\mathbf{p}$ with $\delta(\mathbf{p})<1$, \eqref{eqn:contractivity_extra_condition} is also satisfied. Thus, for such schemes, it is enough to show that the scheme has a contractivity factor to obtain convergence for any initial data and to conclude that the limit is $C^1$.
\end{enumerate}
\end{remark}


\section{Adaptation of generalized L-R algorithms} \label{sec:global_refinements}

We present an adaptation method of generalized L-R algorithms, based on geodesic averages. This method is already introduced in \cite{dyn2006three,WallnerNiraProx}. Nevertheless, the convergence result stated there is the one that follows from proximity conditions, which applies only for $\delta(\mathbf{p})$ small enough. First, we discuss in detail our adaptation and then analyze the resulting schemes, charactering classes of schemes for which convergence from any initial data is guaranteed. 

\subsection{The algorithm of global refinement} \label{subsec:gloabl_refinement_1}

Consider a linear subdivision scheme $\mathcal{S}$ of the form \eqref{eqn:classical_subdivision_scheme}, with a symbol $a(z)=\sum_{j \in \mathbb{Z}} a_jz^j$. The factorization of the symbol plays an important role in the analysis of convergence and smoothness of linear subdivision schemes \cite{NiraScheme}, and is also significant in our adaptation.

We start with a class of convergent linear subdivision schemes having symbols which can be factorized into real linear factors. Recall that a necessary condition for convergence is that $a(-1)=0$ and $a(1)=2$ \cite[Proposition 2.1]{NiraScheme}. Thus, we can write
\begin{equation} \label{eqn:symbol_linear_factorization}
a(z) = z^{-s} (1+z) \frac{1+\alpha_1z}{1+\alpha_1} \cdots \frac{1+\alpha_mz}{1+\alpha_m} , 
\end{equation}
where $-\alpha_1^{-1},\ldots -\alpha_m^{-1}$ are the nonzero roots of the symbol and $s$ is an integer. Note that $1$ cannot be a root of a symbol since $a(1) = 2$. Thus, $\alpha_j  \neq -1$. $j=1,\ldots,m$ and \eqref{eqn:symbol_linear_factorization} is well-defined. We further define $\alpha_1$ to be the minimizer of
\begin{equation} \label{eqn:alpha_1_minimization}
 \max(\frac{1}{1+\alpha_j},\frac{\alpha_j}{1+\alpha_j})   ,   
\end{equation}
among $\alpha_1,\ldots,\alpha_m$. The reason will become clear later.

The relation between the factorization \eqref{eqn:symbol_linear_factorization} and the global refinement is based on \eqref{eqn:algebraic_refinement}. For the symbol \eqref{eqn:symbol_linear_factorization} we get from \eqref{eqn:algebraic_refinement} that the linear scheme can be interpreted as
\begin{equation} \label{eqn:symbol_as_aveg}
\begin{array}{r@{}l}
 \sum_{j \in \mathbb{Z}} \mathcal{S}(\mathbf{f})_j z^j &{}= z^{-s} \prod_{i=1}^m \frac{1+\alpha_iz}{1+\alpha_i} \left( (1+z) \sum_{j \in \mathbb{Z}}  f_j z^{2j} \right) \\
&{} = z^{-s} \left(\prod_{i=2}^m  \frac{1+\alpha_iz}{1+\alpha_i} \right) \left( \frac{1+\alpha_1z}{1+\alpha_1} \right)  \sum_{j \in \mathbb{Z}} \left( f_j z^{2j} + f_j z^{2j+1} \right) \\
&{} = z^{-s+1} \prod_{i=2}^m \frac{1+\alpha_iz}{1+\alpha_i}  \sum_{j \in \mathbb{Z}} \left( ( \frac{f_j+\alpha_1 f_{j-1}}{1+\alpha_1}) z^{2j-1} + f_j z^{2j} \right) .
\end{array}
\end{equation}
By this interpretation, the factor $1+z$ indicates an initial elementary refinement step in which the data is duplicated. Then, each of the factors $\frac{1+\alpha_jz}{1+\alpha_j} $, $j=1,\ldots,m$ implies a step of averaging, in which the current data is replaced by the weighted averages with weights $\frac{1}{1+\alpha_j},\frac{\alpha_j}{1+\alpha_j} $ on its pairs of adjacent points. A zero root of the symbol merely changes the value of $s$. This value determines the shift of indices required to be applied, at the end of each refinement step. Note that for $\alpha_i =1$, $i=1,\ldots,m$, this interpretation becomes the L-R algorithm. Thus, we consider the global refinement step corresponding to \eqref{eqn:symbol_linear_factorization} a generalized L-R algorithm.

The adaptation of the global refinement, based on geodesic averages, is summarized in Algorithm \ref{alg:Global_refinement}.

\begin{algorithm}[ht]
\caption{Global refinement step}
\label{alg:Global_refinement}
\begin{algorithmic}[1]
\REQUIRE  The values $s$ and $\alpha_1,\ldots,\alpha_m$ of the symbol \eqref{eqn:symbol_linear_factorization}. \\ The data to be refined by $\mathcal{S}$, $ \mathbf{p} = \{p_i\}_{i \in \mathbb{Z}}$.
\ENSURE The refined data $\mathcal{S} \left( \mathbf{p} \right) $.
\STATE  $q_{2i,0} \gets p_{i}$    \label{alg:init1}
\STATE  $q_{2i+1,0} \gets p_{i}$  \label{alg:init2}
\FOR[Go over each term in the factorization of the symbol]{$j=1$ \TO $m$}    \label{alg:factor_loop}
\FOR{$i \in \mathbb{Z}$}  \label{alg:main_loop}
\STATE  $q_{i,j} \gets M_\frac{\alpha_j}{1+\alpha_j}(q_{i,j-1},q_{i+1,j-1}) $ \label{alg:average_loop}
\ENDFOR     \label{alg:data_const2}
\ENDFOR
\FOR[A final shifting]{$i \in \mathbb{Z}$} 
\STATE   $\mathcal{S} \left( \mathbf{p} \right)_{i-s+1}  \gets q_{i,m} $. \label{alg:shifting_step}
\ENDFOR     
\RETURN $\mathcal{S} \left( \mathbf{p} \right)$
\end{algorithmic}
\end{algorithm}

Note that for data sampled from a geodesic curve, all points generated by Algorithm \ref{alg:Global_refinement}, are on this geodesic curve.

\subsection{Analysis of schemes corresponding to factorizable symbols over the reals} \label{subsec:analysis_linear factor}

For our first result, we restrict the discussion to the case where the symbol \eqref{eqn:symbol_linear_factorization} has a full set of real negative roots, namely $\alpha_i>0$, $i=1,\ldots,m$. 
\begin{theorem} \label{thm:global_contraction1}
Let $\mathcal{S}$ be a linear subdivision scheme with the symbol \eqref{eqn:symbol_linear_factorization}, such that $\alpha_j>0$, $j=1,\ldots,m$. Then, the adapted scheme based on the global refinement step of Algorithm \ref{alg:Global_refinement} has a contractivity factor $ \mu =  \max\{ \frac{1}{1+\alpha_1},\frac{\alpha_1}{1+\alpha_1}  \} $. 
\end{theorem}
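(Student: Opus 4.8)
The plan is to track the quantity $\delta(\cdot)$ through the successive stages of Algorithm \ref{alg:Global_refinement} and to show that the entire contraction is produced by the very first averaging round, while all later rounds are merely non-expansive. Write $t_j = \frac{\alpha_j}{1+\alpha_j}$ for the weight used in round $j$; since $\alpha_j>0$ we have $t_j \in (0,1)$ and $1-t_j = \frac{1}{1+\alpha_j}$, so every round is an interpolating geodesic average. Denote by $\mathbf{q}_j = \{q_{i,j}\}_{i\in\mathbb{Z}}$ the data after round $j$, with $\mathbf{q}_0$ the doubled data of lines \ref{alg:init1}--\ref{alg:init2}, and note that the final index shift in line \ref{alg:shifting_step} does not change any distances, so it suffices to bound $\delta(\mathbf{q}_m)$.

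For the doubling and the first round I would argue as follows. The doubled sequence $\mathbf{q}_0$ has the pattern $\dots,p_i,p_i,p_{i+1},p_{i+1},\dots$, so its adjacent distances alternate between $0$ and $d(p_i,p_{i+1})$, giving $\delta(\mathbf{q}_0)=\delta(\mathbf{p})$. Applying round $1$ with weight $t_1$, I compute $q_{2i,1} = M_{t_1}(p_i,p_i) = p_i$ and $q_{2i+1,1} = M_{t_1}(p_i,p_{i+1})$. The metric property \eqref{eqn:metric_property} and its complement then yield $d(q_{2i,1},q_{2i+1,1}) = t_1\, d(p_i,p_{i+1})$ and $d(q_{2i+1,1},q_{2i+2,1}) = (1-t_1)\, d(p_i,p_{i+1})$. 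Hence $\delta(\mathbf{q}_1) \le \max(t_1,1-t_1)\,\delta(\mathbf{p}) = \mu\,\delta(\mathbf{p})$. This is exactly where the minimality of $\alpha_1$ in \eqref{eqn:alpha_1_minimization} enters: forcing the minimizing factor to act first makes $\max(t_1,1-t_1)$ as small as the argument can deliver.

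It then remains to show that each round $j\ge 2$ satisfies $\delta(\mathbf{q}_j)\le \delta(\mathbf{q}_{j-1})$. For consecutive output points $q_{i,j}=M_{t_j}(q_{i,j-1},q_{i+1,j-1})$ and $q_{i+1,j}=M_{t_j}(q_{i+1,j-1},q_{i+2,j-1})$ I would insert the common point $q_{i+1,j-1}$ and combine the triangle inequality with the metric property:
\begin{equation*}
d(q_{i,j},q_{i+1,j}) \le d(q_{i,j},q_{i+1,j-1}) + d(q_{i+1,j-1},q_{i+1,j}) = (1-t_j)\,d(q_{i,j-1},q_{i+1,j-1}) + t_j\,d(q_{i+1,j-1},q_{i+2,j-1}),
\end{equation*}
a convex combination bounded by $\delta(\mathbf{q}_{j-1})$. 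Iterating over $j=2,\ldots,m$ gives $\delta(\mathbf{q}_m)\le \delta(\mathbf{q}_1)\le \mu\,\delta(\mathbf{p})$, which establishes the claimed factor; note $\mu\in(0,1)$ because $\alpha_1>0$.

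The steps are essentially arithmetic once the bookkeeping is fixed, so I do not expect a deep obstacle here. The point demanding the most care is the non-expansiveness of a single averaging round: one must pass from the linear identity for $\delta$ to the metric setting using only the triangle inequality and \eqref{eqn:metric_property}, since no genuine linear combination of points is available on a manifold. A secondary point to verify is that the data remains admissible throughout, i.e. that every geodesic average invoked is well-defined; because each $t_j\in(0,1)$ the new points lie on geodesic segments between existing adjacent points and $\delta$ never increases, so admissibility of $\mathbf{p}$ is preserved under the refinement.
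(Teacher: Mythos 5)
Your proposal is correct and follows essentially the same route as the paper's proof: the first averaging round yields the factor $\mu=\max\{\frac{1}{1+\alpha_1},\frac{\alpha_1}{1+\alpha_1}\}$ via the metric property \eqref{eqn:metric_property} applied to the doubled data, and each subsequent round is non-expansive by inserting the common point $q_{i+1,j-1}$ and combining the triangle inequality with \eqref{eqn:metric_property} to obtain a convex combination. Your added observations (the index shift preserving distances, admissibility being maintained since all weights lie in $(0,1)$) are harmless refinements of the same argument.
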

\begin{proof}
Following Algorithm \ref{alg:Global_refinement} we get that after the initial stage of Line \ref{alg:init1} and Line \ref{alg:init2} we have that
\[  d(q_{2i,0},q_{2i+1,0}) = 0, \qquad d(q_{2i-1,0},q_{2i,0}) \le \delta(\mathbf{p}) , \quad i \in \mathbb{Z} . \]
After the first iteration of the loop of Line \ref{alg:factor_loop} we have (see \eqref{eqn:symbol_as_aveg})
\[  q_{2i,1} = q_{2i,0} , \qquad q_{2i+1,1} = M_{\frac{\alpha_1}{1+\alpha_1}}(q_{2i+1,0},q_{2i+2,0}) , \quad i \in \mathbb{Z} . \] 
By the metric property \eqref{eqn:metric_property},
\[  d(q_{2i,1},q_{2i+1,1}) = \frac{1}{1+\alpha_1} \delta(\mathbf{p}), \qquad d(q_{2i-1,0},q_{2i,0}) \le \frac{\alpha_1}{1+\alpha_1}\delta(\mathbf{p}) , \quad i \in \mathbb{Z} . \]
Thus, for $\mathbf{q}^{[1]} = \{ q_{i,1} \}_{i \in \mathbb{Z}} $, $\delta(\mathbf{q}^{[1]} ) \le \mu \delta(\mathbf{p})$ with $ \mu =  \max\{ \frac{1}{1+\alpha_1},\frac{\alpha_1}{1+\alpha_1}  \} $. The next iterations, $j=2,\ldots,m$, retain the maximal bound of $\mu \delta(\mathbf{p}) $, since for $j>1$ 
\[  d(q_{i,j},q_{i+1,j} ) \le d(q_{i,j}, q_{i+1,j-1} ) + d(q_{i+1,j-1}.q_{i+1, j} ) \le \frac{\alpha_j}{1+\alpha_j} \mu \delta(\mathbf{p}) + \frac{1}{1+\alpha_j} \mu \delta(\mathbf{p}) = \mu \delta(\mathbf{p}) .\]
\end{proof}

Note that the contractivity factor of Theorem \ref{thm:global_contraction1} satisfies $\mu \ge \frac{1}{2}$ since $\frac{\alpha_1}{1+\alpha_1},\frac{1}{1+\alpha_1} \in (0,1)$ and $\frac{\alpha_1}{1+\alpha_1}+\frac{1}{1+\alpha_1} = 1$, with $\mu=\frac{1}{2}$ for $\alpha_1=1$.

The L-R algorithm satisfies the conditions of Theorem \ref{thm:global_contraction1}. Indeed, this theorem is a generalization of a similar result in \cite[Lemma 4.1]{dyn2001spline} for the adapted L-R algorithm to compact sets.

Next, we show that the adapted subdivision schemes corresponding to symbols having a full set of real negative roots, are displacement-safe.
\begin{theorem} \label{thm:DS_condition_negative_roots}
Let $\mathcal{S}$ be as in Theorem \ref{thm:global_contraction1}. Denote by $\widetilde{\mathcal{S}}$ the adapted scheme based on the global refinement of Algorithm \ref{alg:Global_refinement}. Then, $\widetilde{\mathcal{S}}$ is displacement-safe.
\end{theorem}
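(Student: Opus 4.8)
The plan is to bound the displacement $d(\widetilde{\mathcal{S}}(\mathbf{p})_{2i}, p_i)$ by tracking how far each point travels through the $m$ rounds of geodesic averaging in Algorithm \ref{alg:Global_refinement}. After the doubling step (Lines \ref{alg:init1}--\ref{alg:init2}) we have $q_{2i,0}=q_{2i+1,0}=p_i$, so each original point sits at two consecutive indices. The refined value $\widetilde{\mathcal{S}}(\mathbf{p})_{2i}$ equals, up to the fixed index shift by $s-1$ in Line \ref{alg:shifting_step}, some $q_{k,m}$ whose ancestor after doubling is $p_i$. The natural strategy is to show that $q_{k,j}$ stays within $O(\delta(\mathbf{p}))$ of $q_{k,0}$ for every $j$, and then identify the relevant index $k$ so that $q_{k,0}=p_i$.

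The key estimate I would establish is that each averaging round moves a point by at most a multiple of the current inter-point distance. Concretely, since $q_{k,j} = M_{\alpha_j/(1+\alpha_j)}(q_{k,j-1},q_{k+1,j-1})$, the metric property \eqref{eqn:metric_property} gives
\begin{equation} \label{eqn:ds_one_step}
d(q_{k,j},q_{k,j-1}) = \frac{\alpha_j}{1+\alpha_j}\, d(q_{k,j-1},q_{k+1,j-1}) \le \frac{\alpha_j}{1+\alpha_j}\, \delta(\mathbf{q}^{[j-1]}),
\end{equation}
where $\mathbf{q}^{[j-1]}=\{q_{i,j-1}\}_{i\in\mathbb{Z}}$. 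From the contractivity argument in the proof of Theorem \ref{thm:global_contraction1}, the spacing at every intermediate stage is controlled, namely $\delta(\mathbf{q}^{[j]}) \le \mu\,\delta(\mathbf{p})\le\delta(\mathbf{p})$ for $j\ge1$ and $\delta(\mathbf{q}^{[0]})\le\delta(\mathbf{p})$. Summing \eqref{eqn:ds_one_step} along the telescoping chain from stage $0$ to stage $m$ and applying the triangle inequality yields
\begin{equation} \label{eqn:ds_telescope}
d(q_{k,m},q_{k,0}) \le \sum_{j=1}^{m} d(q_{k,j},q_{k,j-1}) \le \left( \sum_{j=1}^{m} \frac{\alpha_j}{1+\alpha_j} \right) \delta(\mathbf{p}),
\end{equation}
so with $C=\sum_{j=1}^{m}\frac{\alpha_j}{1+\alpha_j}$, a constant depending only on the symbol and not on $\mathbf{p}$, we obtain a bound of the required form \eqref{eqn:contractivity_extra_condition}.

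The remaining point is the bookkeeping of indices: I must verify that the index $k$ for which $q_{k,m}$ becomes $\widetilde{\mathcal{S}}(\mathbf{p})_{2i}$ after the shift in Line \ref{alg:shifting_step} indeed satisfies $q_{k,0}=p_i$, i.e.\ that the even-indexed refined point descends from the correct original sample. This follows by inspecting \eqref{eqn:symbol_as_aveg}, which exhibits exactly how the doubled data $\{p_j z^{2j}+p_j z^{2j+1}\}$ is recombined by the averaging factors and then realigned by $z^{-s+1}$. I expect this index-tracking to be the main obstacle, not because it is deep but because the shift by $s$ and the parity conventions in the algorithm make it easy to land on an off-by-one ancestor; care is needed to confirm that the chosen $q_{k,0}$ in the telescoping chain \eqref{eqn:ds_telescope} is genuinely $p_i$ rather than an adjacent sample. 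Once the alignment is fixed, \eqref{eqn:ds_telescope} directly delivers the displacement-safe inequality, completing the proof.
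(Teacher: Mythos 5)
Your proposal is correct and takes essentially the same route as the paper: the paper's proof is the same telescoping of per-round displacements, phrased as an induction over the adapted schemes $\widetilde{\mathcal{S}}_j$ built from the first $j$ factors of the symbol, where the metric property \eqref{eqn:metric_property} bounds each increment by $\delta(\widetilde{\mathcal{S}}_j(\mathbf{p}))$ and the contractivity of Theorem \ref{thm:global_contraction1} yields $K_{j+1}=\mu+K_j$, exactly parallel to your one-step estimate and telescoping sum over the internal variables $q_{k,j}$ (your constant $\sum_{j=1}^m \alpha_j/(1+\alpha_j)$ versus the paper's $K_m=1+(m-1)\mu$ is an immaterial difference). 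The index bookkeeping you single out as the main obstacle is in fact harmless, and the paper dispenses with it in one line: since the shift $s$ and the stencil width are fixed, the ancestor $q_{k,0}$ of $\widetilde{\mathcal{S}}(\mathbf{p})_{2i}$ is some $p_{i+c}$ with $\abs{c}$ bounded independently of $\mathbf{p}$, so even an ``off-by-one'' ancestor only adds $\abs{c}\,\delta(\mathbf{p})$ via one extra triangle inequality and is absorbed into the constant $C$ of \eqref{eqn:contractivity_extra_condition}---exact ancestry is not needed for displacement-safety.
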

\begin{proof}
The proof shows by induction that $d( \widetilde{\mathcal{S}}(\mathbf{p})_{2i},p_i   ) \le K_m \delta(\mathbf{p})$, $i \in \mathbb{Z}$. Denote by $\mathcal{S}_j$ the linear subdivision scheme with a symbol obtained from the symbol of $\mathcal{S}$ by retaining the first $j$ factors, $1 \le j \le m$, so that the adapted scheme of $\mathcal{S}_j$, $\widetilde{\mathcal{S}}_j$, uses only $j$ iterations of the loop of Line \ref{alg:factor_loop} in Algorithm \ref{alg:Global_refinement}. Obviously $\mathcal{S} = \mathcal{S}_m$. We use induction on $j$. For $j=1$, after the initial steps of Lines \ref{alg:init1} and \ref{alg:init2}, Algorithm \ref{alg:Global_refinement} inserts new points on the geodesic curves, connecting adjacent data points. Therefore, it is clear that we have $ d( \widetilde{\mathcal{S}}_1(\mathbf{p})_{2i},\mathbf{p}_i   ) \le \delta(\mathbf{p})$, namely we get the constant $K_1 =1 $ for the case $j=1$. The induction step assumes  
\[ d( \widetilde{\mathcal{S}}_j(\mathbf{p})_{2i},p_i   ) \le K_j \delta(\mathbf{p})  , \quad  i \in \mathbb{Z} , \]
for a given $j$, $1 \le j < m-1$ with a constant $K_j$, which depends on $j$ and is independent of $\mathbf{p}$. Then, using the triangle inequality we get
\[ d( \widetilde{\mathcal{S}}_{j+1}(\mathbf{p})_{2i},p_i   ) \le d( \widetilde{\mathcal{S}}_{j+1}(\mathbf{p})_{2i},\widetilde{\mathcal{S}}_{j}(\mathbf{p})_{2i}   ) + d( \widetilde{\mathcal{S}}_{j}(\mathbf{p})_{2i},p_i   ) .\]
While by the metric property \eqref{eqn:metric_property} (see Line \ref{alg:average_loop} in Algorithn \ref{alg:Global_refinement})
\begin{equation} \label{eqn:bound_between_levels}
 d( \widetilde{\mathcal{S}}_{j+1}(\mathbf{p})_{2i},\widetilde{\mathcal{S}}_{j}(\mathbf{p})_{2i}   ) \le   \delta(\widetilde{\mathcal{S}}_{j}(\mathbf{p})   ) . 
\end{equation}
Since Theorem \ref{thm:global_contraction1} implies that
\begin{equation} \label{eqn:common_mu_factor}
 \delta(\widetilde{\mathcal{S}}_{j}(\mathbf{p})   )\le \mu \delta(\mathbf{p}) , \quad \mu =  \max\{ \frac{1}{1+\alpha_1},\frac{\alpha_1}{1+\alpha_1}  \}  , 
\end{equation}
we can choose $K_{j+1} = \mu+K_j $ and the proof follows. The shift, defined by $s$ in \eqref{eqn:symbol_linear_factorization} and done in Line \ref{alg:shifting_step} of Algorithm \ref{alg:Global_refinement}, does not affect the above bound, since $s$ is the same for all $\mathcal{S}_j$. 
\end{proof}

We conclude
\begin{corollary}
Let $\mathcal{S}$ be a linear subdivision scheme with the symbol \eqref{eqn:symbol_linear_factorization}, such that $\alpha_j>0$, $j=1,\ldots,m$. Then, the adapted scheme based on the global refinement of Algorithm \ref{alg:Global_refinement} converges for all admissible input data on the manifold.
\end{corollary}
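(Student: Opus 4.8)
The plan is to combine the two theorems just proved with the general convergence criterion from Theorem \ref{thm:contraction_convergence}. The corollary asserts convergence for all admissible input data, and the machinery needed is already in place: Theorem \ref{thm:global_contraction1} supplies a contractivity factor $\mu = \max\{\frac{1}{1+\alpha_1},\frac{\alpha_1}{1+\alpha_1}\}$, and Theorem \ref{thm:DS_condition_negative_roots} establishes that the adapted scheme $\widetilde{\mathcal{S}}$ is displacement-safe. Thus the corollary is essentially an immediate consequence, and the proof is a short verification that the hypotheses of Theorem \ref{thm:contraction_convergence} are met.

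The key steps, in order, are as follows. First I would invoke Theorem \ref{thm:global_contraction1} to obtain the contractivity factor $\mu$. Here the crucial point is that $\mu < 1$: since $\alpha_1 > 0$, both $\frac{1}{1+\alpha_1}$ and $\frac{\alpha_1}{1+\alpha_1}$ lie strictly in $(0,1)$, so their maximum is strictly less than $1$. This is exactly the condition $\mu \in (0,1)$ required in \eqref{eqn:contractivity_factor_general} and in Theorem \ref{thm:contraction_convergence}. Second, I would invoke Theorem \ref{thm:DS_condition_negative_roots} to conclude that $\widetilde{\mathcal{S}}$ is displacement-safe, i.e. satisfies \eqref{eqn:contractivity_extra_condition}. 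With both properties in hand, Theorem \ref{thm:contraction_convergence} applies directly and yields convergence from any admissible initial manifold data.

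The only genuine care needed is the domain of validity: the averages in Algorithm \ref{alg:Global_refinement} must be well-defined throughout the repeated refinement. Because the input is admissible, the geodesic $M_t$ is defined between adjacent points, and since each weight $\frac{\alpha_j}{1+\alpha_j}$ lies in $[0,1]$ (as $\alpha_j > 0$), every average produced in Line \ref{alg:average_loop} is an interpolating geodesic point lying on an existing geodesic segment. Consequently the refined data remains admissible, and the contractivity factor $\mu < 1$ guarantees $\delta(\widetilde{\mathcal{S}}^k(\mathbf{p})) \to 0$, so the admissibility and the averages persist at every level. I expect this admissibility-propagation to be the main (though mild) obstacle, as it is what ensures the scheme is well-defined for arbitrary initial data rather than only for $\delta(\mathbf{p})$ small. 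Once this is noted, the result follows.

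\begin{proof}
By Theorem \ref{thm:global_contraction1}, the adapted scheme $\widetilde{\mathcal{S}}$ has a contractivity factor $\mu = \max\{\frac{1}{1+\alpha_1},\frac{\alpha_1}{1+\alpha_1}\}$. Since $\alpha_j > 0$ for all $j$, the weights $\frac{\alpha_j}{1+\alpha_j}$ lie in $(0,1)$, so each average in Line \ref{alg:average_loop} of Algorithm \ref{alg:Global_refinement} is an interpolating geodesic point on an existing segment; hence admissibility of the data is preserved under refinement, and $\widetilde{\mathcal{S}}$ is well-defined for all admissible $\mathbf{p}$. Moreover $\frac{1}{1+\alpha_1},\frac{\alpha_1}{1+\alpha_1} \in (0,1)$ gives $\mu < 1$. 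By Theorem \ref{thm:DS_condition_negative_roots}, $\widetilde{\mathcal{S}}$ is displacement-safe. Thus $\widetilde{\mathcal{S}}$ satisfies the hypotheses of Theorem \ref{thm:contraction_convergence}, and therefore converges for all admissible input data on the manifold.
\end{proof}
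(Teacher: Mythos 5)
Your proof is correct and follows exactly the route the paper intends: the corollary is stated in the paper as an immediate consequence (``We conclude'') of Theorem \ref{thm:global_contraction1} (contractivity factor $\mu<1$ since $\alpha_1>0$) and Theorem \ref{thm:DS_condition_negative_roots} (displacement-safety), combined via Theorem \ref{thm:contraction_convergence}. Your additional observation that the weights $\frac{\alpha_j}{1+\alpha_j}\in(0,1)$ keep all averages interpolatory, so admissibility propagates through the refinement levels, is a sound and worthwhile clarification the paper leaves implicit.
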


The second case analyzed here corresponds to symbols of the form \eqref{eqn:symbol_linear_factorization} with several positive roots. Positive roots mean negative weights in the averages, namely extrapolating averages in Line \ref{alg:average_loop} of Algorithm \ref{alg:Global_refinement}. 

\begin{theorem} \label{thm:positive_rots_convergence} 
Let $\mathcal{S}$ be a linear convergent subdivision scheme with symbol $a(z)$ of the form \eqref{eqn:symbol_linear_factorization}, such that $a(z)$ has at least one negative root in addition to the root $-1$. Define
\[ \mu_1 = \min_{\substack{\alpha_i>0 \\ 
     i \in \{1,\ldots,m \} } }  \max\{ \frac{1}{1+\alpha_i},\frac{\alpha_i}{1+\alpha_i}  \} , \]
and renumerate the factors in \eqref{eqn:symbol_linear_factorization} such that $\mu_1$ is attained at $\alpha_1$. If
\begin{equation} \label{eqn:positive_roots_contractive_factor}
\mu = \mu_1 \prod_{i=2}^m \xi(\alpha_i) <1 , 
\end{equation}
where 
\[ \xi(\alpha) = 
\begin{cases} 
1,                                                     & 0<\alpha       , \\
1+2 \abs{\frac{\alpha}{1+\alpha}},  & -1<\alpha<0 , \\
1+2 \abs{\frac{1}{1+\alpha}},          & \alpha<-1      , 
\end{cases}
\]
then the adapted scheme based on global refinement has a contractivity factor $\mu$, and it converges from any admissible initial data on the manifold.
\end{theorem}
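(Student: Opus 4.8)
The plan is to follow the two-step strategy used for Theorems \ref{thm:global_contraction1} and \ref{thm:DS_condition_negative_roots}: first establish that $\mu$ in \eqref{eqn:positive_roots_contractive_factor} is a contractivity factor, then verify that the adapted scheme is displacement-safe, and finally invoke Theorem \ref{thm:contraction_convergence}. Throughout I write $t_j = \frac{\alpha_j}{1+\alpha_j}$ for the weight used in the $j$-th round of averaging in Algorithm \ref{alg:Global_refinement}, and let $\mathbf{q}^{[j]} = \{q_{i,j}\}_{i \in \mathbb{Z}}$ denote the data after that round. The computational heart of the argument is the elementary identity
\[ \xi(\alpha_j) = \abs{1-t_j} + \abs{t_j} , \]
which I would verify by a short case analysis matching the three branches in the definition of $\xi$: for $\alpha_j>0$ the weight $t_j$ lies in $(0,1)$ and the right-hand side collapses to $1$, whereas for $\alpha_j<0$ (an extrapolating average) it exceeds $1$.

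For the contractivity estimate I would track $\delta(\mathbf{q}^{[j]})$ round by round. After the duplication of Lines \ref{alg:init1} and \ref{alg:init2} the adjacent distances alternate between $0$ and at most $\delta(\mathbf{p})$, exactly as in the proof of Theorem \ref{thm:global_contraction1}. Because the factor achieving $\mu_1$ has been renumbered to $\alpha_1>0$ and is processed first, the vanishing distances force the first round to contract by $\max\{\abs{t_1},\abs{1-t_1}\} = \max\{\frac{1}{1+\alpha_1},\frac{\alpha_1}{1+\alpha_1}\} = \mu_1$ rather than by the sum bound; this is precisely why the ordering matters. For every later round $j = 2,\ldots,m$ the data no longer has vanishing gaps, so I would use the triangle inequality together with the metric property \eqref{eqn:metric_property} (in its modified form for weights outside $[0,1]$), obtaining
\[ d(q_{i,j},q_{i+1,j}) \le \abs{1-t_j}\, d(q_{i,j-1},q_{i+1,j-1}) + \abs{t_j}\, d(q_{i+1,j-1},q_{i+2,j-1}) \le \xi(\alpha_j)\,\delta(\mathbf{q}^{[j-1]}) . \]
Multiplying these bounds telescopes to $\delta(\widetilde{\mathcal{S}}(\mathbf{p})) \le \mu_1 \prod_{i=2}^m \xi(\alpha_i)\,\delta(\mathbf{p}) = \mu\,\delta(\mathbf{p})$, which is a genuine contraction precisely under the hypothesis $\mu<1$.

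The displacement-safe property I would obtain by the same induction on the number of retained factors as in Theorem \ref{thm:DS_condition_negative_roots}, comparing $q_{2i,j+1}$ with $q_{2i,j}$. The only change is that the single-round displacement is now bounded by $d(q_{2i,j+1},q_{2i,j}) = \abs{t_{j+1}}\, d(q_{2i,j},q_{2i+1,j}) \le \abs{t_{j+1}}\,\delta(\mathbf{q}^{[j]})$, where $\abs{t_{j+1}}$ may exceed $1$ in the extrapolating case; since $\delta(\mathbf{q}^{[j]})$ is already controlled by a partial product of the contraction factors, summing these bounds yields $d(\widetilde{\mathcal{S}}(\mathbf{p})_{2i},p_i) \le C\delta(\mathbf{p})$ for a finite constant $C$ independent of $\mathbf{p}$, the shift by $s$ being irrelevant exactly as before. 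With contractivity and the displacement-safe property in hand, Theorem \ref{thm:contraction_convergence} delivers convergence from any admissible initial data.

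The main obstacle, and the reason the hypothesis cannot be dropped, is that each extrapolating round genuinely expands adjacent distances ($\xi(\alpha_j)>1$), so contraction is never automatic: the whole scheme contracts only because the single interpolating round attached to $\alpha_1$ contributes the factor $\mu_1<1$, and the hypothesis $\mu<1$ is exactly the requirement that this lone gain outweigh the accumulated expansion $\prod_{i=2}^m \xi(\alpha_i)$. Care is therefore needed to confirm that the $\max$-type bound for the first round really does require both $\alpha_1>0$ and the duplicated structure, since applying a $\xi$-type bound to every round would give a product $\ge 1$ and prove nothing.
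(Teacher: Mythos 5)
Your proposal is correct and takes essentially the same route as the paper's proof: a round-by-round contractivity estimate in which the $\mu_1$-attaining factor $\alpha_1>0$ is applied first to the duplicated data, followed by the same induction establishing the displacement-safe property (with a per-round displacement at most a bounded multiple of $\delta(\widetilde{\mathcal{S}}_{j}(\mathbf{p}))$, the shift by $s$ being immaterial), and convergence via Theorem \ref{thm:contraction_convergence}. Your identity $\xi(\alpha_j)=\abs{t_j}+\abs{1-t_j}$ together with the two-term split is merely a cleaner, unified packaging of the paper's three-term triangle inequality \eqref{eqn:bound_on_line_5_positive_roots}, and it reproduces exactly the same factors $\xi(\alpha_j)$ in all three branches.
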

\begin{proof}
The proof basically modifies the proofs of Theorem \ref{thm:global_contraction1} and Theorem \ref{thm:DS_condition_negative_roots}. By assumption the set 
$\left\lbrace \alpha_i>0  \colon i \in \{1,\ldots,m \} \right\rbrace $ is not empty, and therefore $\mu_1 <1$. Similarly to the proof of Theorem \ref{thm:global_contraction1} the application of an averaging step in Line \ref{alg:average_loop} of Algorithm \ref{alg:Global_refinement}, corresponding to $\alpha_i>0$, does not expand the bound on the distances between consecutive points in the data. On the other hand, an averaging step corresponding to $\alpha_i<0$ expands the bound.

To obtain the expanding factor note that after the $j$-th step in Line \ref{alg:average_loop} of Algorithm \ref{alg:Global_refinement} we can bound the distance between consecutive points by
\begin{equation} \label{eqn:bound_on_line_5_positive_roots}
d(q_{i,j},q_{i+1,j}) \le d(q_{i,j},q_{i,j-1})+d(q_{i,j-1},q_{i+1,j-1})+d(q_{i+1,j-1},q_{i+1,j})   .
\end{equation}
Defining $\mu_j = \mu_1 \prod_{i=2}^j \xi(\alpha_i)$, $j=2,\ldots,m$, we obtain from \eqref{eqn:bound_on_line_5_positive_roots} 
\begin{equation} \label{eqn:positive_root_main_contractive}
 d(q_{i,j},q_{i+1,j}) \le \xi(\alpha_j) \mu_{j-1} \delta(\mathbf{p}) .  
\end{equation}
This together with assumption \eqref{eqn:positive_roots_contractive_factor} shows that $\mu = \mu_m$ is a contractivity factor of the adapted scheme.

To complete the convergence proof, we observe that since $\mu_1 \ge \frac{1}{2}$, assumption \eqref{eqn:positive_roots_contractive_factor} implies that $\xi(\alpha_i)<2$, $i=1,\ldots,m$. Modifying the proof of Theorem \ref{thm:DS_condition_negative_roots}, we get in its notation that \eqref{eqn:bound_between_levels} is replaced by
\[  d( \widetilde{\mathcal{S}}_{j+1}(\mathbf{p})_{2i},\widetilde{\mathcal{S}}_{j}(\mathbf{p})_{2i}   ) \le  2 \delta(\widetilde{\mathcal{S}}_{j}(\mathbf{p})   ) .  \] 
Using the same inductive argument, and the bound \eqref{eqn:positive_root_main_contractive}, we get
\begin{eqnarray*}
d(\widetilde{\mathcal{S}}_{j+1}(\mathbf{p})_{2i},p_i)   &\le&  d(\widetilde{\mathcal{S}}_{j+1}(\mathbf{p})_{2i},\widetilde{\mathcal{S}}_{j}(\mathbf{p})_{2i}) +d(\widetilde{\mathcal{S}}_{j}(\mathbf{p})_{2i},p_i)   \\ 
&\le & 2\delta(\widetilde{\mathcal{S}}_{j}(\mathbf{p}))+K_j  \delta(\mathbf{p}) \le (2\mu_{j} +K_j)  \delta(\mathbf{p})  .
\end{eqnarray*}
Thus, in this case $K_{j+1} = 2\mu_{j} +K_j $. By \eqref{eqn:positive_roots_contractive_factor} $\mu_{j} \le \mu <1$, and since $\alpha_1>0$ implies $K_1=1$, we finally arrive at $K_m = 1+2m$.

We conclude that the adapted scheme obtained from $\mathcal{S}$ by global refinement is displacement-safe and has a contractivity factor $\mu$ given in \eqref{eqn:positive_roots_contractive_factor}. Therefore, it converges by Theorem \ref{thm:contraction_convergence}.
\end{proof}

\begin{remark} \label{rem:inapp_for_interpolation}
Two remarks for section \ref{subsec:analysis_linear factor}:
\begin{enumerate}
\item
As is proved in Theorems \ref{thm:global_contraction1} and \ref{thm:DS_condition_negative_roots} the adaptation of Algorithm \ref{alg:Global_refinement} leads to converging subdivision schemes when applied to linear subdivision schemes with positive mask coefficients, such that their symbols have a full set of negative roots. Theorem \ref{thm:positive_rots_convergence} extends the convergence to schemes with symbols having few positive roots in addition to at least two negative ones, which may correspond to masks with some negative coefficients. 
\item
Negative coefficients necessarily appear in the masks of smooth interpolatory schemes. However, the adaptation based on global refinement is inappropriate for interpolatory subdivision schemes, since the adapted schemes are not interpolatory any more. The commutativity of multiplication of numbers guarantees that for numbers the local refinement and the global refinement coincide.
\end{enumerate}
\end{remark}

In the next section we show that the global refinement can be interpreted as local refinements, based on a ``pyramid averaging".

\subsection{interpretation of the global refinement as local refinement} \label{subsec:local_interpretation}

Most known adaptation methods of convergent linear subdivision schemes to manifold data are based on first rewriting the average \eqref{eqn:classical_subdivision_scheme} in terms of repeated binary averages, and then replacing the linear averages by some manifold averages, see e.g. \cite{GrohsWallner2,WallnerNiraProx,ThomasLie1}. We term the so obtained refinement rules ``local refinement".

Next we show that global refinement can be interpreted as local refinement based on geodesic averages. This observation together with \ref{rem:TCR_part2} of Remark \ref{rem:two_covergence_remarks} leads to the conclusion that the convergence of schemes adapted by global refinement guarantees $C^1$ limits.

We now describe how the global refinement can be interpreted as local refinement. For $i$ even, $\mathcal{S} \left( \mathbf{p} \right)_i$ in Algorithm \ref{alg:Global_refinement} can be calculated by a series of repeated averaging operating on $p_i,p_{i+1},\ldots,p_{i+\lfloor \frac{m}{2} \rfloor }$. First we replace $p_{\ell}$ by $M_0(p_{\ell},p_{\ell+1}),M_{\frac{\alpha_1}{\alpha_1+1}} (p_{\ell},p_{\ell+1})$, $\ell = i,\ldots ,i+\lfloor \frac{m}{2} \rfloor$. We take from this sequence the first $m$ points, to form the initial level for a ``pyramid averaging" of $m-1$ levels. In the $j$-th level of the pyramid averaging any pair of adjacent points is replaced by its geodesic average with weight $\frac{\alpha_{j+1}}{\alpha_{j+1}+1}$, $j=1,\ldots,m-1$. Thus at the $j$-th level there are $m-j$ points. $\mathcal{S} \left( \mathbf{p} \right)_i$ is the only value obtained at level $m-1$ of the pyramid averaging.

For $i$ odd, $\mathcal{S} \left( \mathbf{p} \right)_i$ in Algorithm \ref{alg:Global_refinement} can be calculated similarly, starting the same pyramid averaging from a different sequence. This sequence is obtained from $p_i,p_{i+1},\ldots,p_{i+\lceil \frac{m}{2} \rceil }$ by first replacing $p_{\ell}$ by $M_{\frac{\alpha_1}{\alpha_1+1}} (p_{\ell},p_{\ell+1}), M_1(p_{\ell},p_{\ell+1})$, $\ell = i,\ldots ,i+\lceil \frac{m}{2} \rceil -1$ and then taking the first $m$ points. For illustrations and explanation of the pyramid averaging notion see \cite{schaefer2009non}.

The global refinement calculates only once each geodesic averages of adjacent points in the data, while the same average appears in the calculation of several points by local refinement. Thus, the global refinement is more efficient in terms of computational operations as compared to its local refinement interpretation. Note that it is possible to define a scheme adapted by local refinement which uses the same number of geodesic averages as the global refinement \cite{NiraManofold1}.


\section{Adaptation based on global refinement -- the general case} \label{sec:global_refinements_complex_root}

We extend the global refinement algorithm to converging linear schemes with general symbols. Then, instead of \eqref{eqn:symbol_linear_factorization} such symbols, which are real polynomials, can be factorized into $m_1$ real linear factors (in addition to $1+z$) and $m_2$ quadratic real factors, with $m_1+2m_2=m$. Any complex root of the symbol corresponds to a real quadratic irreducible factor over the reals of the form
\begin{equation} \label{eqn:irreducible_factor}
 \frac{1+\alpha z}{1+\alpha} \cdot \frac{1+\overline{\alpha }z}{1+\overline{\alpha}} =\frac{1+2\operatorname{Re}(\alpha)z+\abs{\alpha}^2 z^2 }{1+2\operatorname{Re}(\alpha)+\abs{\alpha}^2} ,
\end{equation}
where $\alpha$ and $\operatorname{Re}(\alpha)$ is the real part of $\alpha$. The average associated with such a factor has, in the sense of the global refinement algorithm,  the following weights
\begin{equation} \label{eqn:weights_irreducible_term}
 w_1 = \frac{1}{1+2  \operatorname{Re}(\alpha) + |\alpha|^2} , \quad w_2 = \frac{2  \operatorname{Re}(\alpha)}{1+2  \operatorname{Re}(\alpha) + \abs{\alpha}^2} , \quad w_3 = \frac{\abs{\alpha}^2}{1+2  \operatorname{Re}(\alpha) + \abs{\alpha}^2} . 
\end{equation}
Note that $w_1+w_2+w_3 = 1$. Instead of \eqref{eqn:symbol_linear_factorization} we have in this case the factorization
\begin{equation} \label{eqn:symbol_quadratic_factorization}
a(z) = z^{-s}(1+z) \left( \prod_{i=1}^{m_1} \frac{1+\alpha_iz}{1+\alpha_i}   \right) \left(  \prod_{i=m_1+1}^{m_1+m_2}  \frac{1+2\operatorname{Re}(\alpha_i)z+\abs{\alpha_i}^2 z^2 }{1+2\operatorname{Re}(\alpha_i)+\abs{\alpha_i}^2}  \right) .
\end{equation}

\begin{lemma} \label{lemma:positive_denominator}
For any complex $\alpha$, $\alpha \not \in \mathbb{R}$
\begin{equation} \label{eqn:positive_denominator}
1+2\operatorname{Re}(\alpha)+\abs{\alpha}^2 >0 .
\end{equation}
\end{lemma}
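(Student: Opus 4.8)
The plan is to recognize the left-hand side of \eqref{eqn:positive_denominator} as a squared modulus. Indeed, expanding the product $(1+\alpha)(1+\overline{\alpha})$ gives
\[
\abs{1+\alpha}^2 = (1+\alpha)(1+\overline{\alpha}) = 1 + \alpha + \overline{\alpha} + \alpha\overline{\alpha} = 1 + 2\operatorname{Re}(\alpha) + \abs{\alpha}^2,
\]
so the quantity in question is exactly $\abs{1+\alpha}^2$. Being a squared modulus, it is automatically nonnegative, and the whole task reduces to excluding the equality case.

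To handle strictness, I would note that $\abs{1+\alpha}^2 = 0$ holds if and only if $1+\alpha = 0$, i.e.\ $\alpha = -1$. Since $-1 \in \mathbb{R}$, this possibility is ruled out by the standing hypothesis $\alpha \not\in \mathbb{R}$. Hence $\abs{1+\alpha}^2 > 0$, which is the claim. An equivalent route, avoiding the modulus identity, is to write $\alpha = a + b\mathrm{i}$ with $a,b \in \mathbb{R}$ and complete the square:
\[
1 + 2\operatorname{Re}(\alpha) + \abs{\alpha}^2 = 1 + 2a + a^2 + b^2 = (1+a)^2 + b^2,
\]
where the hypothesis $\alpha \not\in \mathbb{R}$ means precisely $b \neq 0$, so $b^2 > 0$ and the sum is strictly positive regardless of the value of $a$.

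There is essentially no obstacle here: the only subtlety is making sure the hypothesis $\alpha \not\in \mathbb{R}$ is invoked at the right place, namely to guarantee $b \neq 0$ (equivalently $\alpha \neq -1$), which is exactly what upgrades the trivial inequality ``$\geq 0$'' to the strict ``$> 0$'' needed so that the denominators in \eqref{eqn:irreducible_factor} and \eqref{eqn:weights_irreducible_term} are well defined. I would present the completing-the-square version, since it is the most transparent and makes the role of the non-real assumption immediately visible.
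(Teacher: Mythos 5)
Your proof is correct, but it takes a genuinely different route from the paper's. The paper argues by a case split on the sign of $\operatorname{Re}(\alpha)$: for $\operatorname{Re}(\alpha)\ge 0$ the inequality is immediate, while for $\operatorname{Re}(\alpha)<0$ it invokes the strict bound $-\operatorname{Re}(\alpha)<\abs{\alpha}$ (this is where non-reality of $\alpha$ enters) to estimate $1+2\operatorname{Re}(\alpha)+\abs{\alpha}^2 > 1-2\abs{\alpha}+\abs{\alpha}^2=(1-\abs{\alpha})^2\ge 0$. You instead identify the quantity as $\abs{1+\alpha}^2=(1+a)^2+b^2$, which eliminates the case analysis and pinpoints the unique equality case $\alpha=-1$, with the hypothesis $\alpha\not\in\mathbb{R}$ entering only as $b\neq 0$. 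Your argument is both more transparent and slightly stronger: it shows the expression is positive for \emph{every} $\alpha\neq -1$, real or not, which dovetails with the paper's standing convention that $\alpha_j\neq -1$ for the real linear factors; and it is the natural computation in context, since $1+2\operatorname{Re}(\alpha)+\abs{\alpha}^2$ is precisely the denominator $(1+\alpha)(1+\overline{\alpha})$ of the irreducible quadratic factor \eqref{eqn:irreducible_factor}. The paper's case-split proof buys nothing beyond this, so either of your two presentations would serve as a drop-in replacement, with the completing-the-square version indeed the clearest choice.
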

\begin{proof}
When $\operatorname{Re}(\alpha)\ge 0$, \eqref{eqn:positive_denominator} holds clearly, while if $\operatorname{Re}(\alpha)<0 $ and $\alpha$ is not real, then $-\operatorname{Re}(\alpha) < \abs{\alpha} $, and 
\[  1+2  \operatorname{Re}(\alpha) + \abs{\alpha}^2 >  1-2\abs{\alpha}+ \abs{\alpha}^2 = (1-\abs{\alpha})^2 \ge 0 . \]
\end{proof}

From Lemma \ref{lemma:positive_denominator} and \eqref{eqn:weights_irreducible_term} we conclude that $w_1$ and $w_3$ are always positive.

\subsection{The general algorithm of global refinement}

For an irreducible quadratic factor in \eqref{eqn:symbol_quadratic_factorization} one is required to average $3$ points on the manifold at once. Motivated by the pyramid averaging of Section \ref{subsec:local_interpretation}, we define such an average and term it a \textit{three pyramid}. 
\begin{definition} \label{def:three_pyramid}
For three points $p_1,p_2,p_3$ with corresponding weights $w_1,w_2,w_3$, the ``three pyramid" is 
\[  \mathcal{P}\left( (p_1,p_2,p_3),(w_1,w_2,w_3) \right) = M_r \left( M_{t_2}(p_3,p_2),M_{t_1}(p_2,p_1), \right) , \]
where the following constraints must hold
\begin{enumerate}
\item
$t_1 r = w_1$.
\item
$(1-t_1)r + t_2(1-r) = w_2$.
\item
$(1-t_2)(1-r) = w_3$.
\end{enumerate}
\end{definition}
\begin{remark}
Two remarks on Definition \ref{def:three_pyramid}:
\begin{enumerate}
\item
For numbers $f_1,f_2,f_3$ the three pyramid coincides with $w_1f_1+w_2f_2+w_3f_3$.
\item
The three constraints of Definition \ref{def:three_pyramid} are not independent. Since we always assume that $w_1+w_2+w_3 = 1$, the sum of the three constraints always holds.
\end{enumerate}
\end{remark}

The global refinement of Algorithm \ref{alg:Global_refinement} uses uniform averaging in each level. The following lemma shows that this is not possible for symbols with complex roots.
\begin{lemma} \label{lemma:three_pyramid}
There is no three pyramid of Definition \ref{def:three_pyramid} for the weights \eqref{eqn:weights_irreducible_term} with $t_1=t_2$. However, such a three pyramid exists with $t_1 > t_2$.
\end{lemma}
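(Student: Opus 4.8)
The plan is to reduce the statement to a system of equations in the unknowns $t_1, t_2, r$ and analyze its solvability. From Definition \ref{def:three_pyramid}, the three constraints relate $(t_1,t_2,r)$ to the prescribed weights $(w_1,w_2,w_3)$ from \eqref{eqn:weights_irreducible_term}. Since the three constraints sum to $1 = w_1+w_2+w_3$, only two are independent; I would take the first and third, namely $t_1 r = w_1$ and $(1-t_2)(1-r) = w_3$, as the governing equations, using the middle one only as a consistency check. The first half of the lemma (no solution with $t_1 = t_2$) is a direct impossibility argument, while the second half (existence with $t_1 > t_2$) requires producing an explicit valid choice.

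\textbf{The impossibility of $t_1 = t_2$.} First I would set $t_1 = t_2 =: t$ and show the resulting system is inconsistent for the weights \eqref{eqn:weights_irreducible_term}. With $t_1 = t_2 = t$, the first and third constraints become $tr = w_1$ and $(1-t)(1-r) = w_3$. The key observation is that by Lemma \ref{lemma:positive_denominator} together with the remark following it, $w_1$ and $w_3$ are \emph{both strictly positive}, whereas $w_2 = 2\operatorname{Re}(\alpha)/(1+2\operatorname{Re}(\alpha)+|\alpha|^2)$ may be negative (precisely when $\operatorname{Re}(\alpha) < 0$). I would expand the middle constraint under $t_1 = t_2 = t$: substituting $tr = w_1$ and $(1-t)(1-r) = w_3$ into $(1-t)r + t(1-r) = w_2$ and simplifying should yield a relation that forces a sign or magnitude contradiction. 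Concretely, the natural route is to compute $w_1 w_3 = tr(1-t)(1-r)$ and compare against the product structure that $t_1=t_2$ imposes; with equal parameters the cross terms in $w_2$ become symmetric, and one obtains an identity (such as $w_2 = t(1-r)+r(1-t) = t+r - 2tr$ together with $w_1 w_3 = tr(1-t)(1-r)$) whose simultaneous satisfaction fails for the specific weights because $w_1 w_3 = |\alpha|^2/(\cdots)^2$ while the constraints force an incompatible value. I expect this to be the delicate computational core.

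\textbf{Existence with $t_1 > t_2$.} For the positive direction I would argue by a dimension/freedom count followed by an explicit construction. There are three unknowns $(t_1, t_2, r)$ and effectively two independent equations, so one degree of freedom remains; the task is to exhibit a choice lying in the admissible range (each parameter in a neighborhood of $[0,1]$ where the geodesic average $M_t$ is defined) satisfying $t_1 > t_2$. The cleanest approach is to solve the first and third constraints for $t_1$ and $t_2$ in terms of $r$, giving $t_1 = w_1/r$ and $t_2 = 1 - w_3/(1-r)$, and then impose the middle constraint as an equation in the single variable $r$. Substituting these into $(1-t_1)r + t_2(1-r) = w_2$ reduces, after using $w_1+w_2+w_3=1$, to an equation in $r$ that I would solve explicitly; then I verify that the resulting $t_1 > t_2$. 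The inequality $t_1 > t_2$ should follow from the positivity of $w_1, w_3$ and the ordering of the weight structure — intuitively, because $w_1$ and $w_3$ are positive and symmetric in role while the averaging must be "front-loaded," the inner averages pull toward distinct parameters.

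\textbf{Main obstacle.} The principal difficulty is the impossibility half: showing rigorously that $t_1 = t_2$ \emph{cannot} produce the weights \eqref{eqn:weights_irreducible_term}. The existence half is essentially a solve-and-check once the parametrization $t_1 = w_1/r$, $t_2 = 1 - w_3/(1-r)$ is in hand, but the nonexistence requires ruling out \emph{all} values of a shared parameter, which means extracting a genuine algebraic obstruction rather than a failed attempt. I anticipate the obstruction is a strict inequality between a product of weights and a quantity forced by the symmetric $t_1 = t_2$ case — traceable back to the fact that for a genuine complex root $\alpha \notin \mathbb{R}$ the discriminant is strictly negative, so $w_2^2 \neq 4 w_1 w_3$ in the direction that a symmetric (palindromic) three-point average would demand. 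Making this quantitative and tying it cleanly to $\alpha \notin \mathbb{R}$ is where I would focus the careful computation.
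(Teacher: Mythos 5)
Your proposal has the right skeleton and even guesses the correct algebraic obstruction, but it stops short of being a proof, and the one concrete mechanism you propose for the existence half is wrong. For the impossibility half, what you describe as the "delicate computational core" is exactly the step you never execute. Carried out, it is short and coincides with the paper's argument: set $t_1=t_2=t$; the case $t=0$ is impossible since $w_1>0$ by Lemma \ref{lemma:positive_denominator}; otherwise substitute $r=w_1/t$ into the third constraint to obtain the quadratic $t^2+(w_3-w_1-1)t+w_1=0$. With $D=1+2\operatorname{Re}(\alpha)+|\alpha|^2$, $w_1=1/D$, $w_3=|\alpha|^2/D$, its discriminant is $\frac{4}{D^2}\left((\operatorname{Re}(\alpha))^2-|\alpha|^2\right)<0$ precisely because $\alpha\notin\mathbb{R}$ — and this quantity equals your conjectured $w_2^2-4w_1w_3$. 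So your instinct ("$w_2^2<4w_1w_3$ for a genuinely complex root") is exactly the paper's discriminant, but phrases like "should yield a relation" and "I expect this to be the delicate computational core" defer the entire argument; as written, the nonexistence half is a plan, not a proof.

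The existence half contains a genuine error. Having solved the first and third constraints for $t_1=w_1/r$ and $t_2=1-w_3/(1-r)$, the middle constraint is \emph{automatically} satisfied — the three constraints sum to $w_1+w_2+w_3=1$, as you yourself observe at the outset — so "imposing" it produces the identity $0=0$, not an equation determining $r$; the parameter $r$ remains genuinely free and a choice must be made. Moreover, your claim that $t_1>t_2$ "should follow from the positivity of $w_1,w_3$" is false at this level of generality: for $r>1$ one gets $t_2>1>t_1$ (see the paper's own Table \ref{tab:chapter4:1a}, where $r=1.5$ gives $t_1\approx 0.157$, $t_2\approx 1.588$), and even for $r\in(0,1)$ the minimum of $t_1-t_2$ over $r$ equals $(\sqrt{w_1}+\sqrt{w_3})^2-1=2\sqrt{w_1 w_3}-w_2$ (cf.\ Lemma \ref{lemma:three_pyramid_parameters_t1t2}), whose positivity when $w_2>0$ again requires the discriminant condition $w_2^2<4w_1w_3$, not merely $w_1,w_3>0$. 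The paper closes this gap by the explicit choice $r=\frac{1}{1+|\alpha|}$, computing $t_1,t_2$ as in \eqref{eqn:pyramid_paramters} and verifying $t_1-t_2=\frac{2(|\alpha|-\operatorname{Re}(\alpha))}{D}>0$ since $|\alpha|>\operatorname{Re}(\alpha)$ for non-real $\alpha$; alternatively, bare existence already follows by taking $r\to 0^{+}$, which sends $t_1=w_1/r\to+\infty$ while $t_2$ stays bounded below $1$. You make neither choice, so the positive half is also incomplete.
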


\begin{proof}
For the first claim of the lemma, we rewrite the constraints of Definition \ref{def:three_pyramid} with $t = t_1=t_2$. The case $t=0$ is impossible since by \eqref{eqn:weights_irreducible_term} and Lemma \ref{lemma:positive_denominator} $w_1>0$. Therefore, substitution of $r = \frac{w_1}{t}$ into the third constraint yields $t^2 + (w_3-w_1-1)t +w_1 = 0$, which has no real solution for the weights of \eqref{eqn:weights_irreducible_term}.

To prove the second claim, one can choose $r = \frac{1}{1+|\alpha|}$ for the weights in \eqref{eqn:weights_irreducible_term}. This yields a three pyramid with 
\begin{equation} \label{eqn:pyramid_paramters}
 t_1 = \frac{w_1}{r}=\frac{|\alpha|+1}{1+ 2 \operatorname{Re}(\alpha) + |\alpha|^2} , \quad  t_2 =  1- \frac{w_3}{1-r}  = \frac{1+2  \operatorname{Re}(\alpha)-\abs{\alpha}}{1+2  \operatorname{Re}(\alpha) + \abs{\alpha}^2}  .
\end{equation}
Note that for a non-real $\alpha$, $\abs{\alpha}> \abs{ \operatorname{Re}(\alpha)}$, and thus in view of Lemma \ref{lemma:positive_denominator}
\begin{equation} \label{eqn:difference_t1_t_2}
 t_1-t_2 =\frac{2(\abs{\alpha}-\operatorname{Re}(\alpha) )}{1+2  \operatorname{Re}(\alpha) + \abs{\alpha}^2}  >0 . 
\end{equation}
\end{proof}

The proof of Lemma \ref{lemma:three_pyramid} suggests a choice for the parameters of the three pyramid, for calculating the average of $3$ points at once. 
This choice, as is shown in Section \ref{subsec:optimal_choice}, is designed to minimize the bound on the distance between averages of two adjacent triplets of points, .

The adaptation of the global refinement algorithm corresponding to the symbol \eqref{eqn:symbol_quadratic_factorization}, based on geodesic averages and three pyramid averages, is summarized in Algorithm \ref{alg:Global_refinement_general_case}, which replaces Algorithm \ref{alg:Global_refinement} for symbols having complex roots.

\begin{algorithm}[ht]
\caption{Global refinement step -- the general case}
\label{alg:Global_refinement_general_case}
\begin{algorithmic}[1]
\REQUIRE  The coefficients $\alpha_1,\ldots,\alpha_{m_1+m_2}$ of the symbol \eqref{eqn:symbol_quadratic_factorization} and the value $s$. \\ Assume $\alpha_1$ is defined as in Theorem \ref{thm:positive_rots_convergence}. \\ The data to be refined by $\mathcal{S}$, $ \mathbf{p} = \{p_i\}_{i \in \mathbb{Z}}$.
\ENSURE The refined data $\mathcal{S} \left( \mathbf{p} \right) $.
\STATE  $q_{2i,0} \gets p_{i}$    \label{algG:init1}
\STATE  $q_{2i+1,0} \gets p_{i}$  \label{algG:init2}
\FOR[Go over each term corresponding to a real root of the symbol]{$j=1$ \TO $m_1$}    \label{algG:factor_loop}
\FOR{$i \in \mathbb{Z}$}  \label{algG:main_loop}
\STATE  $q_{i,j} \gets M_\frac{\alpha_j}{1+\alpha_j}(q_{i,j-1},q_{i+1,j-1}) $ \label{algG:average_loop}
\ENDFOR     \label{algG:data_const2}
\ENDFOR
\FOR[Go over each term corresponding to a complex root of the symbol]{$j=m_1+1$ \TO $m_1+m_2$}   
\FOR{$i \in \mathbb{Z}$}  \label{algG:complex_loop}
\STATE $w_1 \gets \frac{1}{1+2  \operatorname{Re}(\alpha_j) + \abs{\alpha_j}^2} $  
\STATE $w_2 \gets \frac{2  \operatorname{Re}(\alpha_j)}{1+2  \operatorname{Re}(\alpha_j) + \abs{\alpha_j}^2}$ 
\STATE $w_3 \gets  \frac{\abs{\alpha_j}^2}{1+2  \operatorname{Re}(\alpha_j) + \abs{\alpha_j}^2} $ 
\STATE  $  q_{i,j} \gets \mathcal{P}\left( (q_{i,j-1},q_{i+1,j-1},q_{i+2,j-1}),(w_1,w_2,w_3) \right)$
\ENDFOR     
\ENDFOR
\FOR[A final shifting]{$i \in \mathbb{Z}$} 
\STATE   $\mathcal{S} \left( \mathbf{p} \right)_{i-s+1}  \gets q_{i,m} $. \label{algG:shifting_step}
\ENDFOR     
\RETURN $\mathcal{S} \left( \mathbf{p} \right)$
\end{algorithmic}
\end{algorithm}

\subsection{Optimal choice of parameters in the three pyramid} \label{subsec:optimal_choice}

To optimally bound the distance  
\begin{equation} \label{eqn:dist_between_three_pyramids}
 d(\mathcal{P}\left( (p_1,p_2,p_3),(w_1,w_2,w_3) \right),\mathcal{P}\left( (p_2,p_3,p_4),(w_1,w_2,w_3) \right) . 
\end{equation}
we start by setting $r \in (0,1)$. The reasons for this choice are presented in details in Appendix \ref{appendix:why_r_01}. For the other parameters, we first prove the following Lemma.

\begin{lemma} \label{lemma:three_pyramid_parameters_t1t2}
Consider the three pyramid of Definition \ref{def:three_pyramid} for the weights \eqref{eqn:weights_irreducible_term} with $r \in (0,1)$. Then, $t_1>t_2$.
\end{lemma}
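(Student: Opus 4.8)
The plan is to reduce the statement to an elementary scalar inequality and then settle it with a Cauchy--Schwarz estimate. Observe first that of the three constraints in Definition~\ref{def:three_pyramid} only the first and third are needed, since their sum, together with $w_1+w_2+w_3=1$, already forces the middle one. Because $r\in(0,1)$, constraint~1 gives $t_1=w_1/r$ and constraint~3 gives $t_2=1-w_3/(1-r)$, both well defined. Hence
\[
t_1-t_2=\frac{w_1}{r}+\frac{w_3}{1-r}-1,
\]
so proving $t_1>t_2$ amounts to showing $\tfrac{w_1}{r}+\tfrac{w_3}{1-r}>1$ (and this argument will in fact work for every $r\in(0,1)$, not just the given one).

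For the key estimate I would apply Cauchy--Schwarz in Engel (Titu) form, using $r+(1-r)=1$:
\[
\frac{w_1}{r}+\frac{w_3}{1-r}=\frac{(\sqrt{w_1})^2}{r}+\frac{(\sqrt{w_3})^2}{1-r}\ge\frac{\bigl(\sqrt{w_1}+\sqrt{w_3}\bigr)^2}{r+(1-r)}=\bigl(\sqrt{w_1}+\sqrt{w_3}\bigr)^2.
\]
This is legitimate because $w_1,w_3>0$, as recorded right after Lemma~\ref{lemma:positive_denominator}. Expanding the right-hand side and using $w_1+w_3=1-w_2$ turns it into $1-w_2+2\sqrt{w_1w_3}$, so it suffices to prove the strict inequality $2\sqrt{w_1w_3}>w_2$.

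The last step is a direct computation with the weights \eqref{eqn:weights_irreducible_term}. Writing $D=1+2\operatorname{Re}(\alpha)+\abs{\alpha}^2>0$ by Lemma~\ref{lemma:positive_denominator}, one has $w_1w_3=\abs{\alpha}^2/D^2$, hence $2\sqrt{w_1w_3}=2\abs{\alpha}/D$, while $w_2=2\operatorname{Re}(\alpha)/D$; thus $2\sqrt{w_1w_3}-w_2=2(\abs{\alpha}-\operatorname{Re}(\alpha))/D$. Since $\alpha$ is non-real, $\abs{\alpha}>\abs{\operatorname{Re}(\alpha)}\ge\operatorname{Re}(\alpha)$, so this quantity is strictly positive and the claim follows. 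The only points needing care --- the \emph{obstacle}, though a mild one --- are ensuring strictness and making the bound uniform in $r$; the Cauchy--Schwarz estimate handles both at once, being $r$-independent, and strictness comes precisely from $\operatorname{Im}(\alpha)\neq0$. One should also note that $w_2$ may be negative when $\operatorname{Re}(\alpha)<0$, but this only makes $2\sqrt{w_1w_3}>w_2$ easier, so no case distinction is required.
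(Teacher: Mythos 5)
Your proof is correct, and it takes a genuinely different route at the key step. Both arguments begin with the same reduction, $t_1-t_2=\frac{w_1}{r}+\frac{w_3}{1-r}-1$, but the paper then proceeds by calculus: it differentiates $f(r)=\frac{w_1}{r}+\frac{w_3}{1-r}-1$, locates the unique minimizer $r^\ast=\frac{\sqrt{w_1}}{\sqrt{w_1}+\sqrt{w_3}}=\frac{1}{1+\abs{\alpha}}$, and concludes $f(r)\ge f(r^\ast)>0$ by citing the value \eqref{eqn:difference_t1_t_2} already computed in the proof of Lemma \ref{lemma:three_pyramid}. You instead bound $f$ from below uniformly in $r$ via the Engel-form Cauchy--Schwarz inequality, obtaining $f(r)\ge\bigl(\sqrt{w_1}+\sqrt{w_3}\bigr)^2-1=2\sqrt{w_1w_3}-w_2=\frac{2(\abs{\alpha}-\operatorname{Re}(\alpha))}{1+2\operatorname{Re}(\alpha)+\abs{\alpha}^2}>0$, with strictness supplied by $\operatorname{Im}(\alpha)\neq 0$; this is exactly the paper's minimum value $f(r^\ast)$, recomputed directly from the weights \eqref{eqn:weights_irreducible_term} rather than imported from Lemma \ref{lemma:three_pyramid}. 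Your version is derivative-free, self-contained, and handles strictness and uniformity in $r$ in one stroke (and you rightly note that negative $w_2$ needs no separate case). What the paper's calculus route buys in exchange is the explicit minimizer $r^\ast=\frac{1}{1+\abs{\alpha}}$, which is reused later (Remark \ref{rem:pyramid_parameters}) to justify the preferred parameter choice minimizing the expansion bound of Theorem \ref{thm:three_pyramid_distance_bound}; your argument recovers this too, but only implicitly, through the equality case of Cauchy--Schwarz at $r=\frac{\sqrt{w_1}}{\sqrt{w_1}+\sqrt{w_3}}$.
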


\begin{proof}
By the constraints of Definition \ref{def:three_pyramid}, $f(r) = t_1 -t_2= \frac{w_1}{r} + \frac{w_3}{1-r} -1  $. We show that $\min_{r \in (0,1)} f(r) >0$. Indeed, $f^\prime(r) = \frac{-w_1}{r^2} + \frac{w_3}{(1-r)^2}$, which implies a single minimum point of $f(r)$ at $r^\ast = \frac{\sqrt{w_1}}{\sqrt{w_1}+\sqrt{w_3}}=\frac{1}{1+\abs{\alpha}}$. By \eqref{eqn:difference_t1_t_2} we have that $f(r^\ast)>0$, and since $r^\ast$ is a minimum point, $f(r) \ge f(r^\ast)>0$. 
\end{proof}

\begin{theorem} \label{thm:three_pyramid_distance_bound}
Consider the three pyramid of Definition \ref{def:three_pyramid} with the weights \eqref{eqn:weights_irreducible_term} and $r \in (0,1)$. Then, for $p_1,p_2,p_3,p_4$ with $\delta(\mathbf{p}) = \max_{1 \le i \le 3} d(p_{i+1},p_i)$,
\begin{equation} \label{eqn:expand_three_pyramid}
d(\mathcal{P}\left( (p_1,p_2,p_3),(w_1,w_2,w_3) \right),\mathcal{P}\left( (p_2,p_3,p_4),(w_1,w_2,w_3) \right) ) \le \left( 2(t_1-t_2)+1  \right) \delta(\mathbf{p}) . 
\end{equation}
\end{theorem}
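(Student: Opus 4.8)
The plan is to expand both three pyramids in terms of their defining geodesic averages, to identify a geodesic shared by the two pyramids, and then to route the triangle inequality through a point on that shared geodesic. Writing $A = \mathcal{P}\left( (p_1,p_2,p_3),(w_1,w_2,w_3) \right)$ and $B = \mathcal{P}\left( (p_2,p_3,p_4),(w_1,w_2,w_3) \right)$, Definition \ref{def:three_pyramid} gives $A = M_r(a_1,a_2)$ with $a_1 = M_{t_2}(p_3,p_2)$ and $a_2 = M_{t_1}(p_2,p_1)$, and $B = M_r(b_1,b_2)$ with $b_1 = M_{t_2}(p_4,p_3)$ and $b_2 = M_{t_1}(p_3,p_2)$. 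The central observation I would exploit is that $a_1 = M_{t_2}(p_3,p_2)$ and $b_2 = M_{t_1}(p_3,p_2)$ lie on one and the same geodesic, the one joining $p_3$ and $p_2$, at parameters $t_2$ and $t_1$. By Lemma \ref{lemma:three_pyramid_parameters_t1t2} we have $t_1 > t_2$, so the metric property \eqref{eqn:metric_property} yields $d(a_1,b_2) = (t_1-t_2)\,d(p_2,p_3) \le (t_1-t_2)\,\delta(\mathbf{p})$.

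With this in hand, I would decompose $d(A,B) \le d(A,a_1) + d(a_1,b_2) + d(b_2,B)$. Since $A = M_r(a_1,a_2)$ has $a_1$ as its $M_0$-endpoint and $B = M_r(b_1,b_2)$ has $b_2$ as its $M_1$-endpoint, the metric property \eqref{eqn:metric_property} gives $d(A,a_1) = r\,d(a_1,a_2)$ and $d(b_2,B) = (1-r)\,d(b_1,b_2)$. It then remains to bound the two ``diagonal'' distances $d(a_1,a_2)$ and $d(b_1,b_2)$. For the first I would pass through $p_2$, obtaining $d(a_1,a_2)\le d(a_1,p_2)+d(p_2,a_2) = (1-t_2)\,d(p_2,p_3)+t_1\,d(p_1,p_2)\le (1-t_2+t_1)\,\delta(\mathbf{p})$, reading off the two summands from the metric property applied to $a_1 = M_{t_2}(p_3,p_2)$ and $a_2 = M_{t_1}(p_2,p_1)$. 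Symmetrically, passing through $p_3$ gives $d(b_1,b_2)\le (1-t_2+t_1)\,\delta(\mathbf{p})$.

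Substituting these bounds and using $r\in(0,1)$ so that $r + (1-r) = 1$, the two diagonal contributions collapse into a single factor $(1-t_2+t_1)\,\delta(\mathbf{p})$; adding the shared-geodesic term $(t_1-t_2)\,\delta(\mathbf{p})$ then produces exactly $d(A,B)\le \left( 2(t_1-t_2)+1 \right)\,\delta(\mathbf{p})$, which is \eqref{eqn:expand_three_pyramid}.

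I expect the only delicate point to be the orientation bookkeeping in the geodesic averages: keeping straight which endpoint plays the role of $M_0$ and which of $M_1$ in each of $M_{t_2}$, $M_{t_1}$ and $M_r$, since a wrong reading would swap a $t$ with $1-t$ and destroy the telescoping. In particular, the shared-geodesic identity $d(a_1,b_2) = (t_1-t_2)\,d(p_2,p_3)$ hinges on $a_1$ and $b_2$ being written over the same ordered pair $(p_3,p_2)$, which is precisely where the specific argument ordering in Definition \ref{def:three_pyramid} matters, and on the uniqueness of the connecting geodesic guaranteed by admissibility of the data.
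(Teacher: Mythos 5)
Your proof is correct and takes essentially the same route as the paper's: you use the identical path $\mathcal{P}\left( (p_1,p_2,p_3),(w_1,w_2,w_3)\right) \to M_{t_2}(p_3,p_2) \to M_{t_1}(p_3,p_2) \to \mathcal{P}\left( (p_2,p_3,p_4),(w_1,w_2,w_3)\right)$, the same shared-geodesic identity $d\left(M_{t_2}(p_3,p_2),M_{t_1}(p_3,p_2)\right)=(t_1-t_2)\,d(p_2,p_3)$ resting on Lemma \ref{lemma:three_pyramid_parameters_t1t2}, and the same diagonal bounds through $p_2$ and $p_3$. A minor note: your assignment of $r$ to $d(A,a_1)$ and $1-r$ to $d(b_2,B)$ is the consistent reading of the metric property \eqref{eqn:metric_property} (the paper's proof lists these two factors in the opposite order), but since $r+(1-r)=1$ this has no effect on the final bound.
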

 
\begin{proof}
Figure \ref{fig:proof} accompanies the proof. There $M_1$ and $M_2$ correspond to $M_{t_1}(p_2,p_1)$ and $M_{t_2}(p_3,p_2)$ respectively, while $\overline{M_1}$ and $\overline{M_2}$ correspond to $M_{t_1}(p_3,p_2)$ and $M_{t_2}(p_4,p_3)$ respectively. $P$ and $\overline{P}$ there correspond to $\mathcal{P}\left( (p_1,p_2,p_3),(w_1,w_2,w_3) \right) $ and $\mathcal{P}\left( (p_2,p_3,p_4),(w_1,w_2,w_3) \right)$ respectively.

 We first apply the metric property \eqref{eqn:metric_property} and the triangle inequality to get (see the schematic illustration in Figure \ref{fig:eqn22})
\begin{equation} \label{eqn:pyramid_bound_proof1}
\begin{array}{r@{}l}
 d\left( M_{t_2}(p_3,p_2),M_{t_1}(p_2,p_1), \right) &{} \le  d(M_{t_2}(p_3,p_2),p_2)  + d(p_2,M_{t_1}(p_2,p_1)) \\ 
 &{}=  (1-t_2)d(p_2,p_3) + t_1 d(p_1,p_2) .  
\end{array}
\end{equation}
Note that $t_1 = \frac{w_1}{r} >0$ and that $1-t_2 = \frac{w_3}{1-r} >0$. Similarly we get
\[  d(M_{t_1}(p_3,p_2),p_2) = (1-t_1)d(p_2,p_3)  , \]
and since $1-t_2 > 1-t_1$ by Lemma \ref{lemma:three_pyramid_parameters_t1t2}, we conclude that $M_{t_1}(p_3,p_2)$ is closer to $p_2$ than $ M_{t_2}(p_3,p_2)$ (see Figure \ref{fig:eqn23}). Observing that these two averages lie on the geodesic curve connecting $p_2$ and $p_3$, we conclude that
\begin{equation} \label{eqn:pyramid_bound_proof2}
 d(M_{t_1}(p_3,p_2),M_{t_2}(p_3,p_2))  = \left((1-t_2)-(1-t_1)\right) d(p_2,p_3) = (t_1-t_2)d(p_2,p_3) . 
\end{equation}

To prove \eqref{eqn:expand_three_pyramid} we sum the following three bounds, on the lengths of the three parts of the path connecting $P$ to $\overline{P}$ via $\mathit{M_2}$ and $\overline{\mathit{M_1}}$ in Figure \ref{fig:overview},
\begin{eqnarray*}
d( \mathcal{P}\left( (p_1,p_2,p_3),(w_1,w_2,w_3) \right) ,M_{t_2}(p_3,p_2)) &\le & (1-r)(t_1+(1-t_2))\delta(\mathbf{p}) , \\
d(M_{t_2}(p_3,p_2),M_{t_1}(p_3,p_2)) &\le  & (t_1-t_2)\delta(\mathbf{p})  ,\\
d( M_{t_1}(p_3,p_2), \mathcal{P}\left( (p_2,p_3,p_4),(w_1,w_2,w_3) \right) ) &\le &  r(t_1+(1-t_2)) \delta(\mathbf{p}) .
\end{eqnarray*}
The first and third bounds are obtained from Definition \ref{def:three_pyramid} by \eqref{eqn:metric_property} and \eqref{eqn:pyramid_bound_proof1}, the second bound is \eqref{eqn:pyramid_bound_proof2}.

\begin{figure}
\centering    
\subfloat[$\mathit{M_1} = M_{t_1}(p_2,p_1)$ and $\mathit{M_2} = M_{t_2}(p_3,p_2)$ ]{\label{fig:eqn22}			   \includegraphics[width=0.5\textwidth]{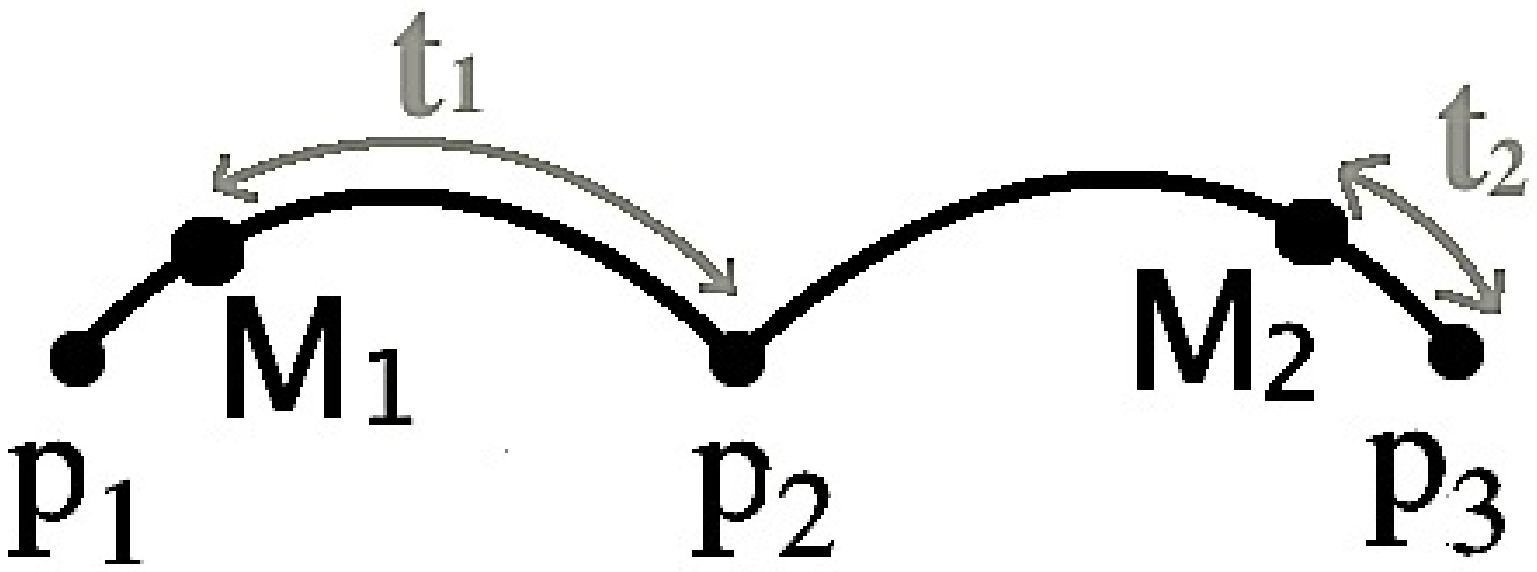}} \quad
\subfloat[ $\overline{\mathit{M_1}} = M_{t_1}(p_3,p_2)$ and $\mathit{M_2} = M_{t_2}(p_3,p_2)$]{\label{fig:eqn23} \includegraphics[width=0.4\textwidth]{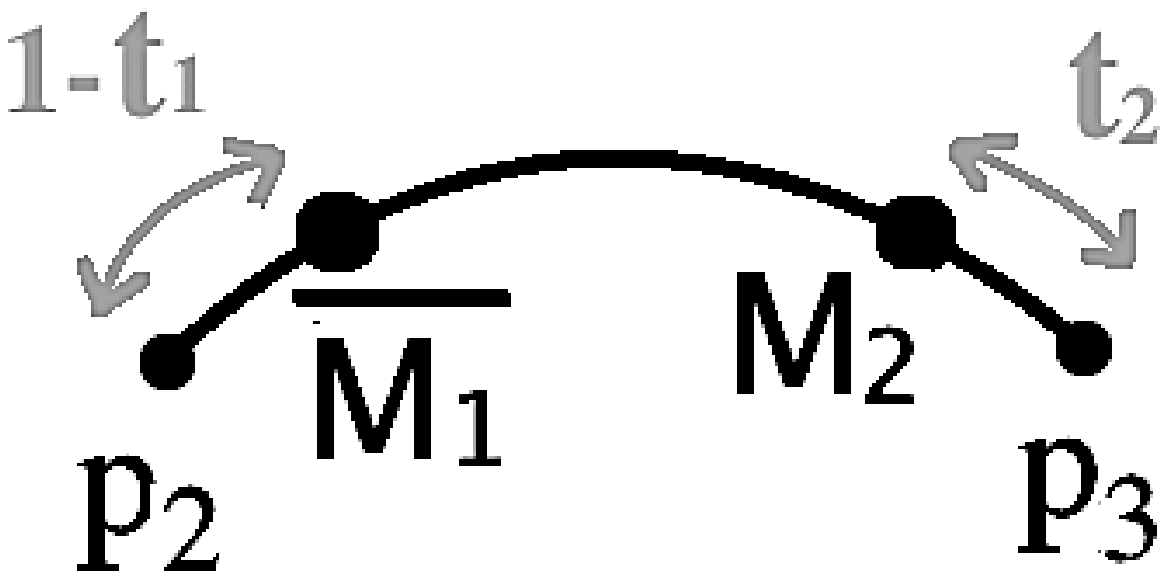}} \\
\subfloat[$P = \mathcal{P}\left( (p_1,p_2,p_3),(w_1,w_2,w_3) \right)$ and ${\overline{P}= \mathcal{P}\left( (p_2,p_3,p_4),(w_1,w_2,w_3) \right)}$. ]{\label{fig:overview}      \includegraphics[width=0.65\textwidth]{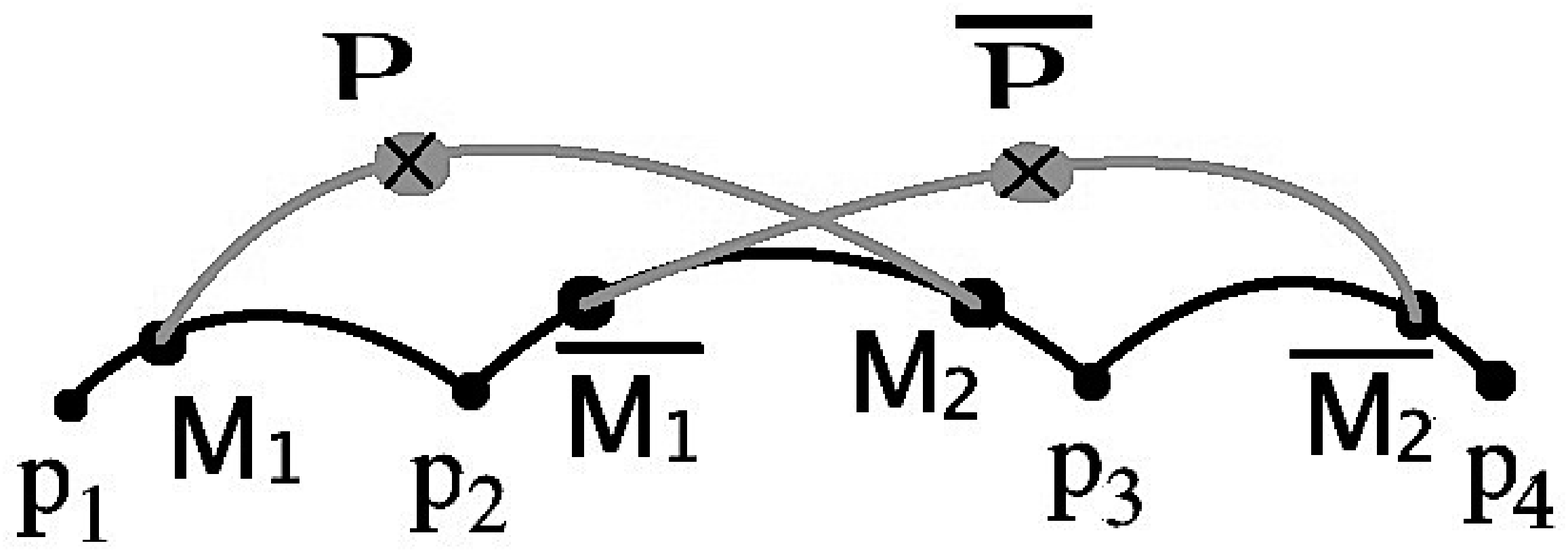}}
\caption{Illustration for the proof of Theorem \ref{thm:three_pyramid_distance_bound}. The curved lines (arcs) symbolically represent geodesic curves connecting two points. The bright arrows in upper figures describe the relative distances compared to each of the corresponding geodesics. }
\label{fig:proof}
\end{figure}

\end{proof}
\begin{remark} \label{rem:pyramid_parameters}
Two important conclusions, related to the parameters of the three pyramid:
\begin{enumerate}
\item
Theorem \ref{thm:three_pyramid_distance_bound} implies that in order to reduce the expansion factor in \eqref{eqn:expand_three_pyramid} corresponding to a three pyramid the function $f(r)$, from the proof of Lemma \ref{lemma:three_pyramid_parameters_t1t2} has to be minimized. Thus, the parameters $t_1$ and $t_2$ of \eqref{eqn:pyramid_paramters} and $r = \frac{1}{1+\abs{\alpha}}$ are preferred. 
\item
For the parameters in the first part of the remark, we deduce from Lemma \ref{lemma:three_pyramid_parameters_t1t2} that the bound in \eqref{eqn:expand_three_pyramid} is bigger than one. This means that the bound $\delta(\mathbf{p})$ on the distances between adjacent points is not preserved after applying the three pyramid.
\end{enumerate}
\end{remark}

Note that in the linear case, any averaging step corresponding to a complex root does not expand the distance between consecutive points as long as the weights \eqref{eqn:weights_irreducible_term} are positive, that is the real part of $\alpha$ is positive.

\subsection{Analysis of convergence}

First, we consider the case of symbols of the form \eqref{eqn:symbol_quadratic_factorization} having several complex roots and then discuss in detail the case of a single complex root.

In case of positive roots, which is analysed in Theorem \ref{thm:positive_rots_convergence}, we show an initial contractivity factor induced by $\alpha_1>0$, associated with the negative root, followed by a series of expanding factors $\xi(\alpha_i)$ for $\alpha_i<0$, associated with the positive roots. Equipped with Theorem \ref{thm:three_pyramid_distance_bound}, the analysis of the convergence of the schemes adapted by Algorithm \ref{alg:Global_refinement_general_case} is essentially the same.

\begin{corollary} \label{cor:multiply_complex_roots}
Let $\mathcal{S}$ be a linear subdivision scheme with symbol $a(z)$ of the form \eqref{eqn:symbol_quadratic_factorization}, with $m_1,m_2 \ge 1$ and $\max_{1 \le i \le m_1} \alpha_i >0 $. Define
\begin{equation*}   
 \mu_1 = \min_{\substack{\alpha_i>0 \\ 
     i \in \{1,\ldots,m_1 \} } }  \max\{ \frac{1}{1+\alpha_i},\frac{\alpha_i}{1+\alpha_i}  \} , 
\end{equation*}
and renumerate the linear factors in \eqref{eqn:symbol_quadratic_factorization} such that $\mu_1$ is attained at $\alpha_1$. If
\begin{equation} \label{eqn:complex_roots_contractive_factor}
\mu = \mu_1 \prod_{i=2}^{m_1+m_2} \xi(\alpha_i) <1 , 
\end{equation}
where 
\[ \xi(\alpha_i) = 
\begin{cases} 
1,                                                     & 0<\alpha_i      , \\
1+2 \abs{\frac{\alpha_i}{1+\alpha_i}},  & -1<\alpha_i<0 , \\
1+2 \abs{\frac{1}{1+\alpha_i}},          & \alpha_i<-1      , \\
1+2 \left( \frac{2(\abs{\alpha_i}-\operatorname{Re}(\alpha_i) )}{1+2  \operatorname{Re}(\alpha_i) + \abs{\alpha_i}^2}  \right)  & \alpha_i \not \in \mathbb{R} .
\end{cases}
\]
then the adapted scheme based on global refinement has a contractivity factor $\mu$, and it converges from any initial admissible data on the manifold.
\end{corollary}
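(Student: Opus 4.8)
The plan is to run the two-part argument of Theorem~\ref{thm:positive_rots_convergence} — exhibiting a contractivity factor and then verifying the displacement-safe property — while inserting Theorem~\ref{thm:three_pyramid_distance_bound} to absorb each round that corresponds to a complex root. The hypotheses guarantee $m_1\ge 1$ with a positive real $\alpha_1$ attaining $\mu_1$, and Algorithm~\ref{alg:Global_refinement_general_case} processes the real-root rounds first (with $\alpha_1$ first), so the overall structure is: an elementary doubling, one contracting round driven by $\alpha_1$, and then $m_1+m_2-1$ possibly expanding rounds, of which the complex ones are three-pyramid averages instantiated with the optimal parameters $r=\tfrac{1}{1+\abs{\alpha}}$ of Remark~\ref{rem:pyramid_parameters}.

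For the contractivity factor I would reproduce the opening of the proof of Theorem~\ref{thm:global_contraction1}: after the doubling of Lines~\ref{algG:init1}--\ref{algG:init2}, consecutive pairs alternate between distance $0$ and distance $\le\delta(\mathbf{p})$, so the first averaging round (with $\alpha_1>0$) yields $\delta(\mathbf{q}^{[1]})\le\mu_1\delta(\mathbf{p})$ by the metric property \eqref{eqn:metric_property}; here it is essential that $\alpha_1$ is applied first, since the contraction is realized on the freshly doubled data. Writing $\mu_j=\mu_1\prod_{i=2}^{j}\xi(\alpha_i)$, I would then show inductively that each later round multiplies the bound on adjacent distances by at most $\xi(\alpha_j)$. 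For a real root this is exactly the triangle-inequality estimate \eqref{eqn:bound_on_line_5_positive_roots}--\eqref{eqn:positive_root_main_contractive} of Theorem~\ref{thm:positive_rots_convergence}. For a complex root, the round applies a three pyramid and Theorem~\ref{thm:three_pyramid_distance_bound} gives an expansion bounded by $2(t_1-t_2)+1$, which by \eqref{eqn:difference_t1_t_2} equals precisely the fourth case of $\xi(\alpha_j)$. Multiplying the factors gives $\delta(\widetilde{\mathcal{S}}(\mathbf{p}))\le\mu\,\delta(\mathbf{p})$, and hypothesis \eqref{eqn:complex_roots_contractive_factor} makes $\mu<1$.

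For the displacement-safe property I would argue by induction over the rounds exactly as in Theorems~\ref{thm:DS_condition_negative_roots} and \ref{thm:positive_rots_convergence}, tracking $d(\widetilde{\mathcal{S}}_j(\mathbf{p})_{2i},p_i)\le K_j\,\delta(\mathbf{p})$ and passing from round $j$ to round $j+1$ with the triangle inequality. The real-root rounds are handled verbatim (increment $2\mu_j$, since $\mu_1\ge\tfrac12$ and \eqref{eqn:complex_roots_contractive_factor} force every $\xi(\alpha_i)<2$). The only new ingredient is a bound on how far one three-pyramid round moves the tracked point: using the metric property \eqref{eqn:metric_property} together with \eqref{eqn:pyramid_bound_proof1}, the distance from $q_{i,j+1}=\mathcal{P}\big((q_{i,j},q_{i+1,j},q_{i+2,j}),(w_1,w_2,w_3)\big)$ to $q_{i,j}$ is at most $\big[(1-r)(t_1+1-t_2)+\abs{1-t_1}\big]\,\delta(\widetilde{\mathcal{S}}_j(\mathbf{p}))$, a fixed multiple of the current $\delta$ that depends only on the parameters $t_1,t_2,r$ attached to $\alpha_{j+1}$. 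Since $\delta(\widetilde{\mathcal{S}}_j(\mathbf{p}))\le\mu_j\,\delta(\mathbf{p})\le\mu\,\delta(\mathbf{p})$ and there are only finitely many rounds, the recursion $K_{j+1}=K_j+C_{j+1}\mu_j$ terminates at a finite constant $K_m$ independent of $\mathbf{p}$. Hence $\widetilde{\mathcal{S}}$ is displacement-safe, and Theorem~\ref{thm:contraction_convergence} delivers convergence from every admissible initial data.

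The main obstacle — and the only piece not already supplied by the earlier results — is this displacement estimate for the three-pyramid round; the rest is a bookkeeping combination of Theorem~\ref{thm:positive_rots_convergence} with the expansion bound \eqref{eqn:expand_three_pyramid}. I would take particular care that the three-pyramid displacement constant is genuinely $\mathbf{p}$-independent (it is built only from the fixed $t_1,t_2,r$ of $\alpha_{j+1}$, not from the data), and that the single contracting round is indeed applied to the doubled data, which is what forces $\alpha_1$ to precede every expanding round.
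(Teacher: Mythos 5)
Your proposal is correct and follows essentially the same route as the paper, which itself only sketches this proof: contractivity by running the factor-by-factor induction of Theorem~\ref{thm:positive_rots_convergence} with the contracting round for $\alpha_1$ applied first to the doubled data, inserting the three-pyramid expansion bound $2(t_1-t_2)+1$ of Theorem~\ref{thm:three_pyramid_distance_bound} with the parameters \eqref{eqn:pyramid_paramters}, which by \eqref{eqn:difference_t1_t_2} is exactly the fourth case of $\xi$; and displacement-safety by the induction of Theorem~\ref{thm:DS_condition_negative_roots}, concluding via Theorem~\ref{thm:contraction_convergence}. Your explicit, data-independent three-pyramid displacement constant $(1-r)(t_1+1-t_2)+\lvert 1-t_1\rvert$ per complex round supplies the one detail the paper merely asserts (its increment $K_{j+1}=\tfrac{3}{2}+K_j$), so the two arguments agree in substance.
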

The proof is in the spirit of the proof of Theorem \ref{thm:positive_rots_convergence} and is based on Theorem \ref{thm:three_pyramid_distance_bound} and the choice \eqref{eqn:pyramid_paramters} of the parameters. Note that similar arguments (as mentioned in the proof of Theorem \ref{thm:positive_rots_convergence}) also confirms that the proof of Theorem \ref{thm:DS_condition_negative_roots} holds in the case of complex roots, with $K_{j+1} = \frac{3}{2} +  K_j$. Thus, the full proof is omitted.

A similar sufficient condition for the convergence of the adapted scheme with refinement step as in Algorithm \ref{alg:Global_refinement_general_case} is
\begin{corollary} \label{cor:uniform_bound_for_expanding}
In the notation of Corollary \ref{cor:multiply_complex_roots}, if
\[ 1+2 \left( \frac{2(\abs{\alpha_i}-\operatorname{Re}(\alpha_i) )}{1+2  \operatorname{Re}(\alpha_i) + \abs{\alpha_i}^2}  \right) < \left( \frac{1}{\mu_1}\prod_{j=2}^{m_1} \xi(\alpha_j)   \right)^\frac{1}{m_2} , \quad  i = m_1+1,m_1+2, \ldots, m_1+m_2 , \]
then, the adapted scheme is convergent for all admissible input data. 
\end{corollary}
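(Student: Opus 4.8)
The plan is to prove Corollary \ref{cor:uniform_bound_for_expanding} by reduction to Corollary \ref{cor:multiply_complex_roots}: I will show that the per-root hypothesis stated here forces the global contractivity quantity $\mu$ of \eqref{eqn:complex_roots_contractive_factor} to satisfy $\mu<1$, after which Corollary \ref{cor:multiply_complex_roots} delivers both the contractivity factor and convergence from any admissible initial data. The first step is the key identification: the left-hand side of the displayed inequality is exactly the fourth branch of the piecewise definition of $\xi$ in Corollary \ref{cor:multiply_complex_roots}, namely the expanding factor $\xi(\alpha_i)$ attached to a non-real root $\alpha_i$. Thus the hypothesis is precisely a \emph{uniform} upper bound imposed on each of the $m_2$ complex-root expanding factors, while the real-root factors $\xi(\alpha_2),\ldots,\xi(\alpha_{m_1})$ and the initial contractivity $\mu_1$ are carried over unchanged from Corollary \ref{cor:multiply_complex_roots}.

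The heart of the argument is a geometric-mean step. Since the standing hypotheses of Corollary \ref{cor:multiply_complex_roots} guarantee $m_2\ge 1$, the exponent $1/m_2$ is legitimate, and each real-root factor satisfies $\xi(\alpha_j)\ge 1$ while $\mu_1\in[\tfrac12,1)$, so the bounding quantity is positive and well defined. I would read the hypothesis as asserting that every complex-root factor lies strictly below the common bound $\bigl(\mu_1\prod_{j=2}^{m_1}\xi(\alpha_j)\bigr)^{-1/m_2}$. Taking the product over the exactly $m_2$ complex indices $i=m_1+1,\ldots,m_1+m_2$, a uniform per-factor bound controls the whole product:
\[ \prod_{i=m_1+1}^{m_1+m_2} \xi(\alpha_i) < \left( \left( \frac{1}{\mu_1 \prod_{j=2}^{m_1} \xi(\alpha_j)} \right)^{1/m_2} \right)^{m_2} = \frac{1}{\mu_1 \prod_{j=2}^{m_1} \xi(\alpha_j)} . \]

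The final step is to substitute this product bound back into \eqref{eqn:complex_roots_contractive_factor}. Splitting the product appearing in $\mu$ into its real-root part (indices $2,\ldots,m_1$) and its complex-root part (indices $m_1+1,\ldots,m_1+m_2$) and applying the inequality above, the $\mu_1\prod_{j=2}^{m_1}\xi(\alpha_j)$ factors telescope to give
\[ \mu = \mu_1 \left( \prod_{j=2}^{m_1} \xi(\alpha_j) \right) \left( \prod_{i=m_1+1}^{m_1+m_2} \xi(\alpha_i) \right) < \mu_1 \left( \prod_{j=2}^{m_1} \xi(\alpha_j) \right) \cdot \frac{1}{\mu_1 \prod_{j=2}^{m_1} \xi(\alpha_j)} = 1 . \]
Hence condition \eqref{eqn:complex_roots_contractive_factor} holds, and Corollary \ref{cor:multiply_complex_roots} applies verbatim to conclude that the adapted scheme of Algorithm \ref{alg:Global_refinement_general_case} has contractivity factor $\mu$ and converges for all admissible input data.

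As for difficulty, I expect no genuine obstacle: the statement is a repackaging of Corollary \ref{cor:multiply_complex_roots} into a more easily verifiable, root-by-root form, so the entire content is the elementary product manipulation above. The only points warranting care are bookkeeping ones, namely that $m_2\ge 1$ so the $1/m_2$-th root is meaningful, that the strictness of the per-factor bound is preserved under taking the product, and that the bounding quantity is placed so that the substitution telescopes exactly to $1$ rather than leaving a residual factor of $\prod_{j=2}^{m_1}\xi(\alpha_j)$.
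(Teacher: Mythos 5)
Your reduction to Corollary \ref{cor:multiply_complex_roots} is exactly the intended route: the paper offers no separate proof of this corollary, treating it as immediate via precisely your geometric-mean/product argument, and your bookkeeping points ($m_2\ge 1$, $\mu_1\in[\tfrac12,1)$, strictness preserved under finite products) are all in order. However, there is a genuine mismatch between the hypothesis as stated and the inequality you actually use. The corollary's bound is $\bigl(\frac{1}{\mu_1}\prod_{j=2}^{m_1}\xi(\alpha_j)\bigr)^{1/m_2}$, with the real-root factors in the \emph{numerator}, whereas your product step silently replaces it by $\bigl(\mu_1\prod_{j=2}^{m_1}\xi(\alpha_j)\bigr)^{-1/m_2}$, i.e.\ the factors in the denominator. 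These coincide only when $\xi(\alpha_j)=1$ for all $j=2,\dots,m_1$, that is, when all remaining real roots are negative. Under the literal reading, taking the product over the $m_2$ complex indices yields only
\[ \prod_{i=m_1+1}^{m_1+m_2}\xi(\alpha_i) < \frac{1}{\mu_1}\prod_{j=2}^{m_1}\xi(\alpha_j) \quad\Longrightarrow\quad \mu = \mu_1\prod_{j=2}^{m_1}\xi(\alpha_j)\prod_{i=m_1+1}^{m_1+m_2}\xi(\alpha_i) < \Bigl(\prod_{j=2}^{m_1}\xi(\alpha_j)\Bigr)^{2}, \]
which is $\ge 1$ as soon as some real $\alpha_j$ is negative, so \eqref{eqn:complex_roots_contractive_factor} does not follow. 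Concretely, with $m_1=2$, $m_2=1$, $\alpha_1=1$ (so $\mu_1=\tfrac12$) and $\alpha_2=-\tfrac13$ (so $\xi(\alpha_2)=2$), the literal hypothesis permits a non-real $\alpha_3$ with $\xi(\alpha_3)=2<4$, yet then $\mu=2$ and no contractivity is obtained.

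Since the denominator version is the mathematically correct sufficient condition for \eqref{eqn:complex_roots_contractive_factor} (and both versions agree in the practically relevant setting of Theorem \ref{thm:single_complex_root}, where $\alpha_i>0$ for $i\le m_1$), the display in the corollary is best regarded as a typo which your proof tacitly corrects; with that reading your argument is complete and identical to the paper's implicit one. But a blind proof must not re-parse the hypothesis without saying so: as written, your second display does not follow from the stated inequality, and you should have flagged the discrepancy explicitly, either proving the corrected statement and noting the typo, or proving the literal statement only in the case $\xi(\alpha_j)=1$ for $j=2,\dots,m_1$.
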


We provide an additional perspective to the above analysis by assuming only one irreducible quadratic factor with all real linear factors corresponding to negative roots. In such a scenario, we can describe exactly the domain in the complex plane from which a single complex $\alpha$ leads to a convergent adapted scheme. This can be extended to several complex roots using the same approach as in Corollary \ref{cor:uniform_bound_for_expanding}.
\begin{theorem} \label{thm:single_complex_root}
Let $\mathcal{S}$ be a linear subdivision scheme, with a symbol of the form \eqref{eqn:symbol_quadratic_factorization}, adapted by Algorithm \ref{alg:Global_refinement_general_case} such that $m_1\ge 1$, $m_2=1$ and $\alpha_i>0$, $1 \le i \le m_1$. Then, the adapted scheme converges from all admissible input data, whenever $\alpha_{m_1+1}$ is outside the domain $\Omega$ given by
\[  \Omega =\left\lbrace  re^{i\phi}   \mid  \rho_1(\phi) \le  r \le \rho_2(\phi) , \quad   \upsilon < \phi < 2\pi-\upsilon  \right\rbrace  \cup  \left\lbrace e^{i\upsilon},e^{-i\upsilon} \right\rbrace. \]
Here $0<\upsilon = \arccos(\frac{3\mu_1-1}{1+\mu_1}) < \arccos(\frac{1}{3})$, and the curves $\rho_1$ and $\rho_2$ are 
\[ \rho_{1,2} (\phi) = \frac{-(1+\mu_1)}{1-\mu_1}\cos(\phi) + \frac{2\mu_1}{1-\mu_1} \mp \sqrt{\left( \frac{-(1+\mu_1)}{1-\mu_1}\cos(\phi) + \frac{2\mu_1}{1-\mu_1} \right)^2-1} ,\]
where $\mu_1$ is the initial contractivity factor $ \mu_1 =  \max_{1\le i \le m_1}\{ \frac{1}{1+\alpha_i},\frac{\alpha_i}{1+\alpha_i} \}$.
\end{theorem}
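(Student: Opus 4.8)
The plan is to reduce the statement to the scalar contractivity bookkeeping already in place and then to translate the condition $\mu<1$ into a region of the complex plane. By Corollary \ref{cor:multiply_complex_roots} (here specialized to $m_2=1$ with all linear factors contractive, so that $\xi(\alpha_i)=1$ for $2\le i\le m_1$), the scheme adapted by Algorithm \ref{alg:Global_refinement_general_case} has contractivity factor
\[ \mu = \mu_1\,\xi(\alpha_{m_1+1}), \qquad \xi(\alpha_{m_1+1}) = 1 + 2(t_1-t_2) = 1 + \frac{4\bigl(\abs{\alpha_{m_1+1}}-\operatorname{Re}(\alpha_{m_1+1})\bigr)}{1+2\operatorname{Re}(\alpha_{m_1+1})+\abs{\alpha_{m_1+1}}^2}, \]
where the expansion factor of the single quadratic factor comes from Theorem \ref{thm:three_pyramid_distance_bound} with the optimal parameters \eqref{eqn:pyramid_paramters} and the last equality is \eqref{eqn:difference_t1_t_2}. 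As noted after Corollary \ref{cor:multiply_complex_roots}, the scheme is also displacement-safe (with $K_{j+1}=\tfrac32+K_j$), so by Theorem \ref{thm:contraction_convergence} it suffices to exhibit the set of $\alpha:=\alpha_{m_1+1}$ for which $\mu<1$ and to identify its complement with $\Omega$. Here $\mu_1\in[\tfrac12,1)$ is the contraction of the linear factors, as recalled after Theorem \ref{thm:global_contraction1}.

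First I would write $\alpha=re^{i\phi}$ with $r>0$ and $\phi\in(0,2\pi)\setminus\{\pi\}$ (the quadratic factor forces $\alpha\notin\mathbb{R}$), so that $\abs{\alpha}=r$, $\operatorname{Re}(\alpha)=r\cos\phi$, $\abs{\alpha}-\operatorname{Re}(\alpha)=r(1-\cos\phi)$, and, by Lemma \ref{lemma:positive_denominator}, the denominator $D:=1+2r\cos\phi+r^2>0$. Substituting into $\mu\ge1$ and clearing $D$ and the positive factor $1-\mu_1$, I expect $\mu\ge1$ to collapse to the quadratic inequality in $r$
\[ r^2 - 2A(\phi)\,r + 1 \le 0, \qquad A(\phi)=\frac{-(1+\mu_1)\cos\phi + 2\mu_1}{1-\mu_1}, \]
after collecting the $r\cos\phi$ terms into the combination $2(1+\mu_1)\cos\phi-4\mu_1$ that produces $A(\phi)$. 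The product of the two roots equals $1$, so the roots share a sign and are exactly $\rho_{1,2}(\phi)=A(\phi)\mp\sqrt{A(\phi)^2-1}$, which is precisely the formula for $\rho_{1,2}$ in the statement. The upward-opening quadratic is $\le0$ exactly on $[\rho_1(\phi),\rho_2(\phi)]$, and since we need $r>0$ with positive product of roots, this interval is nonempty and contained in $(0,\infty)$ precisely when $A(\phi)\ge1$.

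Next I would solve $A(\phi)\ge1$, which rearranges to $\cos\phi\le\frac{3\mu_1-1}{1+\mu_1}$, hence to $\upsilon\le\phi\le2\pi-\upsilon$ with $\upsilon=\arccos\!\bigl(\tfrac{3\mu_1-1}{1+\mu_1}\bigr)$; the bounds $0<\upsilon<\arccos(1/3)$ then follow from $\mu_1\in(\tfrac12,1)$. At the two endpoints $\phi=\pm\upsilon$ one has $A=1$, the discriminant vanishes, and the double root is $r=1$, producing exactly the isolated boundary points $e^{\pm i\upsilon}$; for $\upsilon<\phi<2\pi-\upsilon$ the interval $[\rho_1(\phi),\rho_2(\phi)]$ is a genuine radial band. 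This shows $\Omega=\{\,\mu\ge1,\ r>0\,\}$, so $\alpha\notin\Omega$ is equivalent to $\mu<1$, and convergence from all admissible data then follows from displacement-safety together with Theorem \ref{thm:contraction_convergence}.

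The routine but delicate part, and where I would be most careful, is the sign bookkeeping in the reduction to the quadratic and the classification of the three cases $A(\phi)\ge1$, $A(\phi)\le-1$, and $\abs{A(\phi)}<1$: one must check that only $A(\phi)\ge1$ contributes positive roots, that the other two cases correspond to $\mu<1$ (hence convergent, excluded from $\Omega$), and that the degenerate endpoints are matched correctly to the isolated points $\{e^{\pm i\upsilon}\}$ appended to $\Omega$. The main obstacle is thus not any single hard estimate but ensuring that the complex-plane region $\Omega$ is identified \emph{exactly}, boundary and isolated points included, with the scalar condition $\mu\ge1$.
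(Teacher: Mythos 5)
Your proposal is correct and follows essentially the same route as the paper's proof in Appendix \ref{appendix:proof_single_root}: both reduce convergence to the contractivity condition $2(t_1-t_2)+1<1/\mu_1$ via Theorem \ref{thm:three_pyramid_distance_bound} (with displacement-safety as noted after Corollary \ref{cor:multiply_complex_roots}), substitute \eqref{eqn:difference_t1_t_2} in polar coordinates to get the quadratic $r^2-2A(\phi)r+1>0$, and your $A(\phi)$ is exactly the paper's vertex $\rho^\ast=\gamma(1-\cos\theta)-\cos\theta$, so your case split $A\ge 1$, $A\le -1$, $\abs{A}<1$ mirrors the paper's analysis of the parabola's minimum, including the degenerate double root $r=1$ at $\phi=\pm\upsilon$ yielding the isolated points $e^{\pm i\upsilon}$.
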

The proof is given in Appendix \ref{appendix:proof_single_root}.

First, note that $\Omega$ is symmetric relative to the real axis. To further illustrate $\Omega$ and the complemented domain of convergence $\mathbb{C} \backslash \Omega$ we refer the reader to Figure \ref{fig:omega_domain}, where the domain of convergence for a single irreducible factor and an initial contractivity factor $\mu=\frac{1}{2}$ is presented. This value of $\mu$ implies that $-1$ has multiplicity as a root of the symbol, which is typical to $C^1$ schemes. The convergence domain includes all the complex plane but $\Omega$, and one can clearly notice the domain $\abs{\operatorname{arg}(\alpha)}< \upsilon$ around the positive real axis (between the dashed lines), where there is no restriction on the modulus of the complex $\alpha_{m_1+1}$.

\begin{figure}[ht]
 \centering
 \includegraphics[width=.8\textwidth]{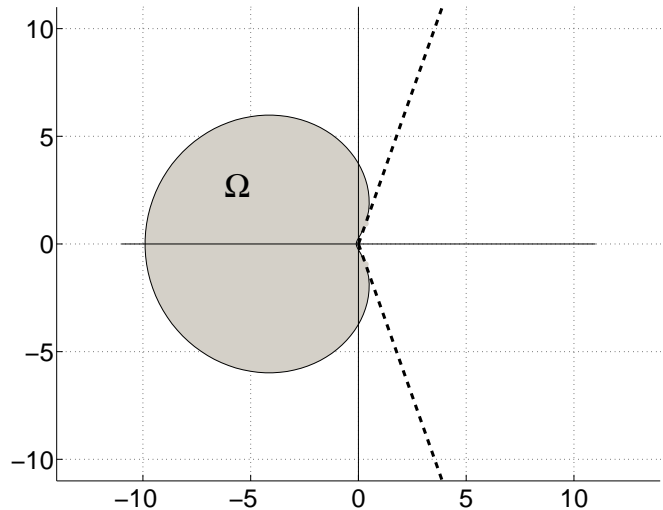} 
 \caption{The domain of convergence $\mathbb{C} \backslash \Omega$ for the case of a single irreducible real quadratic factor and an initial contractivity factor $\mu=\frac{1}{2}$. The dashed lines are $\operatorname{arg}(\alpha) = \pm \upsilon$. }
 \label{fig:omega_domain}
\end{figure}

\begin{remark}
An interesting class of manifolds is the Hadamard manifolds which are globally non-positively curved metric spaces, see e.g., \cite{bridson1999metric}. 
On a Hadamard manifold, any two points $p_1$ and $p_2$, and their connecting geodesic $M_t(p_1,p_2)$, $t \in [0,1]$ satisfy for any point $q$ on the manifold
\[ d^2(q,M_t(p_1,p_2)) \le (1-t)d^2(q,p_1) + td^2(q,p_2) - t(1-t)d^2(p_1,p_2) . \]
Such manifolds are also called (global) CAT(0)-spaces and NPC spaces. Contrary to general manifolds, where geodesics are merely locally the shortest path, in Hadamard manifolds the geodesics are unique and global. 

In Hadamard manifolds, one can establish superior bounds on distances as \eqref{eqn:dist_between_three_pyramids}, between averages of more than two points. For example, in \cite{lim2013approximations}, a class of ``weak contractivity" averages is introduced. Distances of the form \eqref{eqn:dist_between_three_pyramids}, based on such averages, are bounded by $\delta(\mathbf{p})$. Thus, in Hadamard manifolds, irreducible quadratic factors, can be replaced by weak contractivity averages in the global refinement algorithm. With this modification, the contractivity factor is independent of the number of such factors and the convergence of schemes based on the global refinement is guaranteed for any symbol with all roots having negative real parts and at least one negative root in addition to $-1$. Note that all such symbols have positive coefficients.
\end{remark}


\bibliography{manifold_bib}
\bibliographystyle{plain} 


\appendix  \label{appendix}

\section{Supplements for Section \ref{sec:global_refinements_complex_root} } 

\subsection{ Why to choose $0<r<1$ in a three pyramid?} \label{appendix:why_r_01}

The main argument for choosing $0<r<1$ in a three pyramid is to avoid the use of high extrapolation values in the averages of the three pyramid. Namely, we wish to minimize the use of averaging parameters that are much bigger than one or much smaller than zero. 

To simplify the discussion, we focus on the left part of the complex domain, namely consider complex roots with negative real parts, that is $\alpha_j$ such that $\operatorname{Re}(\alpha_j) > 0$, $j=m_1+1,...,m_1+m_2$ in \eqref{eqn:symbol_quadratic_factorization}.
\begin{proposition} \label{prop:three_pyramid_parmeters}
Consider a three pyramid, corresponding to a complex $\alpha_j$ with $\operatorname{Re}(\alpha_j) > 0$, with the parameters $t_1,t_2$ given by \eqref{eqn:pyramid_paramters} for $r = \frac{1}{1+\abs{\alpha_j}} \in (0,1)$. Then, 
\begin{enumerate}
\item
$t_1>0$ and $t_2<1$. Moreover, at least one of $t_1,t_2$ is in $(0,1)$.
\item
In case $t_1 \not \in (0,1)$, then $0<t_1 \le \frac{1+\sqrt{2}}{2} \approx 1.207 $.
\item
In case $t_2 \not \in (0,1)$, then $ -0.207 \approx \frac{1-\sqrt{2}}{2}  \le t_2 <1$.
\end{enumerate}
\end{proposition}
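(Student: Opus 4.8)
The plan is to reduce all three claims to elementary inequalities together with two single–variable rational optimizations. I would first fix notation by writing $a = \operatorname{Re}(\alpha_j) > 0$ and $\rho = \abs{\alpha_j}$, and recall from Lemma \ref{lemma:positive_denominator} (or simply from $a>0$) that the common denominator $D = 1+2a+\rho^2$ is positive. With this notation the parameters \eqref{eqn:pyramid_paramters} read
\[ t_1 = \frac{\rho+1}{D}, \qquad t_2 = \frac{1+2a-\rho}{D}. \]
Everything then follows from analyzing these two rational functions of $(a,\rho)$ subject to $a>0$ and, since $\alpha_j \notin \mathbb{R}$, $\rho > \abs{a} = a$.

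For the first claim, positivity of $t_1$ is immediate because numerator and denominator are positive, and $t_2 < 1$ is equivalent to $-\rho < \rho^2$, which holds for $\rho>0$. To see that at least one of $t_1,t_2$ lies in $(0,1)$, I would argue by contradiction: since $t_1>0$ and $t_2<1$ always hold, the only way for both to fail to be in $(0,1)$ is $t_1 \ge 1$ and $t_2 \le 0$. Now $t_1 \ge 1$ rearranges to $2a \le \rho - \rho^2$, which together with $a>0$ forces $\rho < 1$; while $t_2 \le 0$ rearranges to $2a \le \rho - 1$, which together with $a>0$ forces $\rho > 1$. These are incompatible, proving the claim.

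For the second and third claims I would collapse the two–variable problem to one dimension by exploiting monotonicity in $a$. Since $t_1 = \frac{\rho+1}{D}$ is decreasing in $a$, the bound $a>0$ gives $t_1 < \frac{\rho+1}{1+\rho^2}$; maximizing $g(\rho) = \frac{\rho+1}{1+\rho^2}$ over $\rho>0$ produces a unique critical point at $\rho = \sqrt2 - 1$ with value $\frac{1+\sqrt2}{2}$, whence $t_1 \le \frac{1+\sqrt2}{2}$. Symmetrically, setting $u = 1+2a$ so that $t_2 = \frac{u-\rho}{u+\rho^2}$, one checks $\partial t_2/\partial u = \frac{\rho(\rho+1)}{(u+\rho^2)^2} > 0$, so $t_2$ is increasing in $u$ and hence $t_2 > \frac{1-\rho}{1+\rho^2}$; minimizing $h(\rho) = \frac{1-\rho}{1+\rho^2}$ over $\rho>0$ gives a unique critical point at $\rho = 1+\sqrt2$ with value $\frac{1-\sqrt2}{2}$, whence $t_2 \ge \frac{1-\sqrt2}{2}$. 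Combined with $t_1>0$ and $t_2<1$ from the first claim, these are exactly the stated intervals in the cases $t_1 \notin (0,1)$ and $t_2 \notin (0,1)$.

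The computations themselves are routine, so the only genuine obstacle is organizational: recognizing that the constrained two–variable problem reduces to single–variable calculus via the monotonicity of $t_1$ and $t_2$ in $a$, so that the extremal bounds are approached in the limit $a \to 0^+$ (purely imaginary $\alpha_j$) and the constraint $\rho > a$ never becomes active at the optimum. I would also note in passing that both extremal values are approached but not attained, so the inequalities are in fact strict, the non-strict form in the statement being a harmless weakening.
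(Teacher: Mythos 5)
Your proof is correct, and it reaches the boundary reduction and the one-variable optimizations that are the heart of the paper's argument, but by a genuinely cleaner route. The paper works in polar coordinates: it writes $t_1 = g(\rho,\theta) = \frac{1+\rho}{1+2\rho\cos(\theta)+\rho^2}$, asserts via a (merely sketched) two-variable critical-point analysis that $g$ has no extreme points inside the half-plane $\operatorname{Re}(\alpha_j)>0$, maximizes on the boundary $\theta = \pm\frac{\pi}{2}$ to get the two maxima at $\rho = \sqrt{2}-1$, and disposes of the third claim by noting that $1-t_2$ has the same maximal values as $t_1$. You replace that unverified ``no interior extrema'' step with a one-line monotonicity observation: $a = \operatorname{Re}(\alpha_j)$ enters $t_1 = \frac{\rho+1}{1+2a+\rho^2}$ only through the positive denominator, so $t_1 < \frac{\rho+1}{1+\rho^2}$ for all $a>0$, and symmetrically $\partial t_2/\partial u > 0$ with $u = 1+2a$ gives $t_2 > \frac{1-\rho}{1+\rho^2}$; the remaining single-variable maximization/minimization agrees with the paper's (critical points $\rho = \sqrt{2}\mp 1$, extremal values $\frac{1\pm\sqrt{2}}{2}$). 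This buys you rigor where the paper hand-waves, and as a bonus shows the inequalities are strict, the supremum being attained only in the excluded limit $a\to 0^+$ -- you are right that the proposition's non-strict bounds are a harmless weakening. Your first claim is the paper's dichotomy read in contrapositive: the paper shows $\abs{\alpha_j}\ge 1 \Rightarrow t_1<1$ and $\abs{\alpha_j}<1 \Rightarrow t_2>0$, while you show $t_1\ge 1 \Rightarrow \rho<1$ and $t_2\le 0 \Rightarrow \rho>1$ and conclude by incompatibility -- the same inequalities in reverse. Finally, your remark that the constraint $\rho > a$ is inactive at the optimum (dropping it only enlarges the feasible set, so the bounds persist) correctly settles a detail the paper does not address.
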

\begin{proof}
For the first claim, we note that by \eqref{eqn:pyramid_paramters} $t_1>0$ (since $r>0$ and $w_1>0$), and $t_2<1$. For the rest of the first claim, we consider two cases. When $\abs{\alpha_j} \ge 1$ 
\[ 1+2\operatorname{Re}(\alpha_j)+\abs{\alpha_j}^2>1+\abs{\alpha_j}^2 \ge  1+ \abs{\alpha_j} ,\] 
and $t_1<1$ by \eqref{eqn:pyramid_paramters}. On the other hand, when $\abs{\alpha_j}<1$, we have that 
\[  1+2\operatorname{Re}(\alpha_j)-\abs{\alpha_j} >  1+2\operatorname{Re}(\alpha_j)-1 >0   , \] 
and it follows from \eqref{eqn:pyramid_paramters} that $t_2>0$.

For the second claim, denote $\alpha_j=\rho e^{i \theta}$, which leads to $t_1 = g(\rho,\theta) = \frac{1+\rho}{1+2\rho\cos(\theta)+\rho^2}$. Then, a standard analysis using differentiation shows no extreme points for $g$ inside the domain $\operatorname{Re}(\alpha_j) > 0$. On the boundary of this half plane, that is $\theta = \pm \frac{\pi}{2}$, there are two maximum points at $\rho =\sqrt{2}-1$, yielding the bound on $t_1$. The third claim is proved similarly. One finds that $1-t_2$ has the same maximal values as $t_1$.
\end{proof}

Proposition \ref{prop:three_pyramid_parmeters} shows that choosing the parameters \eqref{eqn:pyramid_paramters} with $r = \frac{1}{1+\abs{\alpha_j}} \in (0,1)$ guarantees at most one extrapolating average, with a weight just slightly outside $(0,1)$, namely in $(-0.207,1.207)$. For $r \not \in (0,1)$ this is not the case.

Recall the general expressions of the parameters $t_1 = \frac{w_1}{r}$ and $1-t_2 = \frac{w_3}{1-r}$. These expressions reveal that if $r \not \in (0,1)$ both $t_1$ and $t_2$ cannot be in $ (0,1)$. To get $t_1 \in (0,1)$ and $t_2$ bigger than $1$ but close to it, $r$ has to be sufficiently large, while to get $t_2 \in (0,1)$ and $t_1<0$ but close to $0$, $r$ must be negative with $\abs{r}$ sufficiently large. Moreover, if $r \not \in (0,1)$ but close to $(0,1)$ either $t_1$ or $t_2$ become unbounded. To further demonstrate this, we present a simple example.
\begin{example}
We illustrate the extreme extrapolation values required for the case of $r \not\in (0,1)$ by calculating the parameters of the three pyramid for the special case $\alpha_j = 1+\frac{1}{2}i \in \mathbb{C}$. Note that for this root, when using $r = \frac{1}{1+\abs{\alpha_j}} = 0.4721$, the corresponding parameters are $t_1 \approx 0.4984$, $t_2 \approx 0.4428$. Furthermore, for the case of a single quadratic factor, as done in Theorem \ref{thm:single_complex_root}, the scheme has a contractivity factor for any $\mu_1<0.9$.

On the other hand, allowing small $r$ values of extrapolation results in high, undesired extrapolation values of $t_2$ (when $r>1$) or of $t_1$ (when $r<0$). This is demonstrate in Tables \ref{tab:chapter4:1a} and \ref{tab:chapter4:1b}, where as $r$ gets closer to $(0,1)$, either $t_1$ or $t_2$ get further away from $(0,1)$.

\begin{table}[!ht]
  \centering
  \subfloat[Case of $r>1$  \label{tab:chapter4:1a}]{
  \hspace{.5cm}%
    \centering
  \begin{tabular}{c|c|c }
    $r$ & $t_1$ & $t_2$ \\\hline\hline    
    1.5 &    0.1569  &  1.5882 \\ \hline
    1.4 &   0.1681 &  1.7353 \\ \hline
    1.3 &    0.1810  & 1.9804 \\ \hline
    1.2 &    0.1961 &  2.4706 \\ \hline
    1.1 & 0.2139 &  3.9412 
  \end{tabular}
  }
  \qquad
  \subfloat[Case of $r<0$ \label{tab:chapter4:1b}]{
  \hspace{.5cm}%
    \centering
  \begin{tabular}{c|c|c }
    $r$ & $t_1$ & $t_2$ \\\hline\hline
   -0.5 &   -0.4706   &  0.8039  \\ \hline
   -0.4 &   -0.5882  &  0.7899 \\ \hline
   -0.3 &   -0.7843   & 0.7738 \\ \hline
   -0.2 &   -1.1765  &  0.7549 \\ \hline
   -0.1 &   -2.3529 &   0.7326
  \end{tabular}
  }
  \caption{The parameters of the three pyramid for $\alpha_j = 1+\frac{1}{2} i$}
  \label{tab:extrapolation_values}
\end{table}

Note that another outcome of high extrapolation values of $r$ is that the convergence domain, $\mathbb{C} \backslash \Omega$ of Theorem \ref{thm:single_complex_root}, becomes more restrictive than the one obtained for $r\in(0,1)$. The proof for this claim can be easily understood but involves many technical details and thus is omitted. 
\end{example}

\subsection{Proof of Theorem \ref{thm:single_complex_root} } \label{appendix:proof_single_root}

As in Corollary \ref{cor:multiply_complex_roots}, it is sufficient to ensure a contractivity factor. Recall that $\alpha_i >0$, $i=1,\ldots,m_1$. Accordingly, we have that by reaching Line \ref{algG:complex_loop} of Algorithm \ref{alg:Global_refinement_general_case} we retain the bound $\mu_1 \delta(\mathbf{p})$ on the distance between adjacent points. Using Theorem \ref{thm:three_pyramid_distance_bound} we get that a sufficient condition for having a contractivity factor is
\[ 2(t_1-t_2) +1 < \frac{1}{\mu_1}  .\]
Substituting \eqref{eqn:difference_t1_t_2} and $\alpha_{m_1+1} = \rho e^{i\theta}$ we get the sufficient condition for contractivity
\begin{equation} \label{eqn:parabola_to_analyse}
 \rho^2-2(\gamma (1-\cos(\theta))-\cos(\theta)) \rho +1 >0  , 
\end{equation}
with $\gamma = \frac{2\mu_1}{1-\mu_1}$. 

For a fixed $\theta$, consider the left-hand side of \eqref{eqn:parabola_to_analyse} as a parabola in $\rho$ and denote it by $h(\rho)$. Then, $h^\prime(\rho) = 2\rho- 2(\gamma (1-\cos(\theta))-\cos(\theta)) $. The derivative implies that the minimum, as a function of $\rho$, is obtained at 
\[ \rho^\ast =  \gamma (1-\cos(\theta))-\cos(\theta), \]
for a fixed $\theta$.

We divide the analysis into two different cases and start with the case that \eqref{eqn:parabola_to_analyse} holds for any $\rho>0$. Since the parabola $h(\rho)$ has a minimum and satisfies $h(0)=1$, there are two scenarios: the first is $\rho^\ast<0$ and the second is $\rho^\ast \ge 0$ and $h(\rho^\ast) = 1-(\rho^\ast)^2>0$, namely $ 0 \le \rho^\ast<1$.  Therefore, a combined condition for the two scenarios is simply $\rho^\ast<1$, or $\cos(\theta)>\frac{3\mu_1-1}{1+\mu_1}$. Thus, the argument of the cosine must satisfies $\theta \in (-\upsilon,\upsilon)$, where $\upsilon= \arccos(\frac{3\mu_1-1}{1+\mu_1})$, with $\frac{1}{3} \le |\frac{3\mu_1-1}{1+\mu_1}|<1$, since $\frac{1}{2} \le \mu_1<1$. This is the domain where we have a contractivity factor for all $\rho$.

The second case is when the parabola $h(\rho)$ has two positive roots. In this case we have a non-negative discriminant, that is $(\gamma (1-\cos(\theta))-\cos(\theta))^2 -1 \ge 0$, or equivalently $\gamma (1-\cos(\theta))-\cos(\theta) \ge 1$ (the case $\gamma (1-\cos(\theta))-\cos(\theta) \le -1$ was already treated above, since in this case $\rho^\ast<0$). The equality corresponds to the case $\theta =  \pm \upsilon$ namely to a vanishing discriminant. In this case $h(\rho) = (\rho-1)^2$ and \eqref{eqn:parabola_to_analyse} holds for $\rho \neq 1$. Otherwise, we have contractivity when $\rho$ is bigger than the large root or smaller than the small root of $h(\rho)$. The roots are curves, parameterized by $\phi \in (\upsilon,2\pi-\upsilon)$, as appears in the statement of the theorem.


\end{document}